
\documentclass[12pt]{amsart}
\usepackage{amsfonts,amssymb,amscd,amstext,mathrsfs,color}

\input xy
\xyoption{all}

\textwidth = 156mm
\textheight = 235mm
\evensidemargin=0mm
\oddsidemargin=0mm
\hoffset=2mm
\voffset=-19mm
\parskip =1mm
\parindent = 6mm
\linespread{1.07}
\pagestyle{plain}

\newtheorem{theorem}{Theorem}

\newtheorem{lemma}{Lemma}
\newtheorem{proposition}{Proposition}
\newtheorem{claim}{Claim}

\theoremstyle{definition}

\newcommand{\C}{\mathbb{C}}
\renewcommand{\O}{\mathscr{O}}
\newcommand{\Aut}{{\operatorname{Aut}}}
\newcommand{\id}{\mathrm{id}}
\renewcommand{\Im}{{\operatorname{Im}}}

\begin{document}

\title{Chaotic holomorphic automorphisms \\ of Stein manifolds \\ with the volume density property}

\author{Leandro Arosio and Finnur L\'arusson}

\address{Dipartimento Di Matematica, Universit\`a di Roma \lq\lq Tor Vergata\rq\rq, Via Della Ricerca Scientifica 1, 00133 Roma, Italy}
\email{arosio@mat.uniroma2.it}

\address{School of Mathematical Sciences, University of Adelaide, Adelaide SA 5005, Australia}
\email{finnur.larusson@adelaide.edu.au}

\thanks{L.~Arosio was supported by SIR grant \lq\lq NEWHOLITE -- New methods in holomorphic iteration\rq\rq, no.~RBSI14CFME.  F.~L\'arusson was supported by Australian Research Council grant DP150103442.}

\subjclass[2010]{Primary 32M17.  Secondary 14L17, 14R10, 14R20, 32H50, 32M05, 37F99}

\date{3 May 2018.  Minor edits 29 June 2018}

\keywords{Stein manifold, linear algebraic group, homogeneous space, holomorphic automorphism, volume-preserving automorphism, chaotic automorphism, Anders\'en-Lempert theory, volume density property, algebraic volume density property, stable manifold}

\begin{abstract}
Let $X$ be a Stein manifold of dimension $n\geq 2$ satisfying the volume density property with respect to an exact holomorphic volume form.  For example, $X$ could be $\C^n$, any connected linear algebraic group that is not reductive, the Koras-Russell cubic, or a product $Y\times\C$, where $Y$ is any Stein manifold with the volume density property.  

We prove that chaotic automorphisms are generic among volume-preserving holomorphic automorphisms of $X$.  In particular, $X$ has a chaotic holomorphic automorphism.  A proof for $X=\C^n$ may be found in work of Forn\ae ss and Sibony.  We follow their approach closely.  

Peters, Vivas, and Wold showed that a generic volume-preserving automorphism of $\C^n$, $n\geq 2$, has a hyperbolic fixed point whose stable manifold is dense in $\C^n$.  This property can be interpreted as a kind of chaos.  We generalise their theorem to a Stein manifold as above.
\end{abstract}

\maketitle

\section{Introduction} 
\label{sec:intro}

\subsection{}
Complex manifolds can be thought of as laid out across a spectrum characterised by rigidity at one end and flexibility at the other.  On the rigid side, Kobayashi-hyperbolic manifolds have at most a finite-dimensional group of symmetries.  On the flexible side, there are manifolds with an extremely large group of holomorphic automorphisms, the prototypes being the affine spaces $\C^n$ for $n\geq 2$.  From a dynamical point of view, hyperbolicity does not permit chaos.  An endomorphism of a Kobayashi-hyperbolic manifold is non-expansive with respect to the Kobayashi distance, so every family of endomorphisms is equicontinuous.  In this paper, we show that not only does flexibility allow chaos: under a strong anti-hyperbolicity assumption, chaotic automorphisms are generic.  Our main result is the following.

\begin{theorem}  \label{t:main}
Let $X$ be a Stein manifold of dimension at least $2$ satisfying the volume density property with respect to an exact holomorphic volume form.  Chaotic automorphisms are generic among volume-preserving holomorphic automorphisms of $X$.  In particular, $X$ has a chaotic holomorphic automorphism.
\end{theorem}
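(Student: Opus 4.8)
The plan is to exhibit the chaotic automorphisms as a dense $G_\delta$ subset of the group $G=\Aut_\omega(X)$ of volume-preserving holomorphic automorphisms of $X$, equipped with the compact-open topology. The first step is to note that $G$ is a Polish, hence Baire, topological group: one metrises it by a complete metric controlling $f$, $f^{-1}$ and their derivatives on an exhaustion of $X$ by compacta, so that limits remain biholomorphic, together with the closed condition $f^*\omega=\omega$, so that limits remain volume-preserving. Working with Devaney's notion of chaos, I would then invoke the theorem of Banks, Brooks, Cairns, Davis and Stacey: on an infinite separable metric space — and $X$, being a connected manifold of positive dimension, is perfect and infinite — topological transitivity together with density of the periodic points already forces sensitive dependence on initial conditions. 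Thus it suffices to prove that the transitive automorphisms and those with dense periodic points each form a comeager subset of $G$.

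Next I would fix a countable basis $\{U_i\}$ for the topology of $X$ and write both properties as countable intersections of open sets. For transitivity, put $\mathcal{T}_{ij}=\{f\in G:\ f^{\,n}(U_i)\cap U_j\neq\varnothing\ \text{for some}\ n\geq 1\}$; each $\mathcal{T}_{ij}$ is open, since the condition $f^n(x)\in U_j$ for some $x\in U_i$ survives small perturbations of $f$, and $f$ is transitive exactly when $f\in\bigcap_{i,j}\mathcal{T}_{ij}$. For periodic points, the delicate issue is openness, which I would secure by demanding hyperbolicity: let $\mathcal{P}_i=\{f\in G:\ f\ \text{has a hyperbolic periodic point in}\ U_i\}$. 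A hyperbolic periodic point persists and varies continuously under perturbation, by the implicit function theorem, so $\mathcal{P}_i$ is open, and every $f\in\bigcap_i\mathcal{P}_i$ has dense periodic points. Chaos then holds on the $G_\delta$ set $\bigl(\bigcap_{i,j}\mathcal{T}_{ij}\bigr)\cap\bigl(\bigcap_i\mathcal{P}_i\bigr)$, and the whole problem reduces to proving that each $\mathcal{T}_{ij}$ and each $\mathcal{P}_i$ is dense.

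Density is where the volume density property enters, through the volume-preserving Anders\'en--Lempert theorem it yields: a volume-preserving isotopy of a Runge domain, starting at the inclusion and keeping all images Runge, can be approximated uniformly on compacta by global volume-preserving automorphisms of $X$. Given $f\in G$, a compact $K$, and $\varepsilon>0$, I would pick $p\in U_i$ and follow its forward $f$-orbit. If the orbit leaves $K$, I fix its first exit point and use Anders\'en--Lempert to build a volume-preserving automorphism that agrees with $f$ to within $\varepsilon$ on $K$ — the deformation being supported away from $K$ — and that redirects the escaped orbit either back to $p$, closing up a periodic orbit for $\mathcal{P}_i$, or into $U_j$, achieving $\mathcal{T}_{ij}$. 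For $\mathcal{P}_i$ one adjusts in addition the $1$-jet along the orbit so that the return map at $p$ is hyperbolic; since $f^*\omega=\omega$ forces the product of the eigenvalues of the return map to equal $1$, a hyperbolic spectrum is freely available. If instead the orbit never leaves $K$, then by Poincar\'e recurrence one may choose $p\in U_i$ whose orbit returns arbitrarily close to $p$, and an $\varepsilon$-small volume-preserving correction near the return, again furnished by Anders\'en--Lempert, closes the orbit and arranges the hyperbolic jet (and similarly steers a suitable iterate into $U_j$ for transitivity).

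The main obstacle I anticipate is precisely this density step, and within it the verification of the hypotheses of the Anders\'en--Lempert theorem. One must route the orbit through a suitably tame chain of points and thicken the point-to-point prescription to an isotopy of \emph{injective} volume-preserving maps all of whose images remain Runge in $X$, while simultaneously prescribing a hyperbolic $1$-jet at the periodic point and keeping the entire deformation $\varepsilon$-small on $K$. Packaging the escaping and the recurrent cases uniformly, and checking that the Runge condition is preserved along the whole isotopy as points are transported past $K$, is the technical heart of the argument; by contrast, the Baire-category bookkeeping surrounding it is routine.
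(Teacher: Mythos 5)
Your Baire-category framework is sound and your ``escaping orbit'' case is essentially the paper's own endgame: the authors likewise fix a pair of open sets, consider the open set of automorphisms with a transverse cycle through both (Touhey's reformulation of chaos, equivalent to your transitivity-plus-dense-periodic-points decomposition via Banks et al.), and prove density by letting orbit points escape a holomorphically convex compact $K$ and then using Varolin's jet-interpolation theorem (the volume-preserving Anders\'en--Lempert machinery) to splice the escaped points into a transverse cycle, with the perturbation close to the identity on $f(K)$. That part of your plan is correct in outline.

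The genuine gap is your treatment of the case where the forward orbit never leaves $K$. Poincar\'e recurrence can bring the orbit back near its starting point, but it cannot steer the orbit into a prescribed open set $U_j$ that the trapped orbit stays far away from; doing so requires moving a point of $K$ a definite distance, which is incompatible with the perturbation being $\varepsilon$-small on $K$ (Varolin's theorem only lets you freely relocate points \emph{outside} the polynomially convex compact set on which you demand closeness to the identity). So your density argument for $\mathcal{T}_{ij}$ breaks down exactly when orbits are confined to compact sets, and this is not a technicality to be absorbed into ``Runge bookkeeping'': it is the central difficulty of the paper. The authors resolve it by first proving Theorem \ref{t:expelling} --- that a generic $f\in\Aut_\omega(X)$ is expelling, i.e.\ the points with relatively compact forward orbit are nowhere dense --- which occupies all of Section \ref{sec:expelling} and proceeds by showing that a robustly trapped open set would force the closure of the iterates to be a compact abelian Lie group whose orbits are totally real, holomorphically convex tori, and then deriving a contradiction from a commuting divergence-free vector field; it is precisely here that the exactness of $\omega$ is used (to exclude $n$-dimensional torus orbits). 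Your proposal never invokes the exactness hypothesis at all, which is a telltale sign that this step is missing: the authors explicitly state that without exactness the genericity of expelling automorphisms, and hence of chaos, is open. Once Theorem \ref{t:expelling} is available, one simply restricts to expelling $f$ (and $f^{-1}$), so that suitable orbit points always escape $f(K)$ and only your first case is needed.
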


The assumption that the volume form is exact obviously holds if $H^n(X)\footnote{All cohomology in this paper is complex de Rham cohomology.}=0$, where $n=\dim X$.  Recall that $X$ has the homotopy type of a smooth manifold $S$ of real dimension at most $n$.  If $S$ is noncompact or $S$ is not orientable or $\dim S<n$, then $H^n(X)\cong H^n(S)=0$.  Hence, the volume form can fail to be exact only if $S$ is compact and orientable and $\dim S=n$; then $H^n(X)$ is 1-dimensional.

Let us explain some of the terms in the theorem.  We adopt the popular definition of chaos due to Devaney \cite{Devaney1989} and simplified by Banks et al.~\cite{BBCDS1992}: an endomorphism is chaotic if it has a dense forward orbit and its periodic points are dense.  An equivalent and, for our purposes, more convenient formulation was given by Touhey \cite{Touhey1997}: an endomorphism is chaotic if and only if for every pair of nonempty open subsets, there is a cycle that visits both of them.

A holomorphic volume form $\omega$ is a nowhere-vanishing holomorphic section of the canonical bundle.  So a Stein manifold $X$ as in the theorem has a trivial canonical bundle (holomorphically trivial or, equivalently by Grauert's Oka principle, topologically trivial).  To say that an automorphism $f$ of $X$ is volume-preserving with respect to $\omega$ means that $f^*\omega=\omega$.  Then $f$ also preserves the real volume form $\omega\wedge\bar\omega$.

The group $\Aut(X)$ of holomorphic automorphisms of $X$ is a topological group with respect to the compact-open topology, which is defined by a complete metric.  The volume-preserving automorphisms form a closed subgroup $\Aut_\omega(X)$.  Theorem \ref{t:main} states that $\Aut_\omega(X)$ contains a dense $G_\delta$ subset consisting of chaotic automorphisms.

The volume density property of $X$ with respect to $\omega$ was introduced by Varolin \cite{Varolin2001} (see also \cite[Section 4.10]{Forstneric2017}).  It means that in the Lie algebra $\mathfrak g$ of holomorphic vector fields $v$ on $X$ whose $\omega$-divergence vanishes (meaning that the Lie derivative of $\omega$ along $v$ is zero or, equivalently, that the contraction $v\rfloor\omega$ is a closed form), the complete fields generate a dense subalgebra.  Here, $v$ is called complete if its flow exists for all complex time; the $\omega$-divergence of $v$ vanishes if and only if the time maps of its flow preserve $\omega$.  For a Stein manifold of dimension at least $2$, the Lie algebra $\mathfrak g$ is infinite-dimensional.

If a Stein manifold $X$ has the volume density property, then its tangent bundle is spanned by finitely many complete holomorphic vector fields of divergence zero.  Hence $X$ admits a dominating spray, so it is elliptic in the sense of Gromov and therefore an Oka manifold.  Also, if $\dim X\geq 2$, then the group $\Aut_\omega(X)$ acts infinitely transitively on $X$, that is, $k$-transitively for every $k\geq 1$.  For these and several other properties of Stein manifolds with the volume density property, see \cite[Section 2]{KK2011}.

\subsection{}
The first example of a Stein manifold with the volume density property (besides the trivial examples $\C$ with $dz$ and $\C^*$ with $dz/z$) is $\C^n$, $n\geq 2$, with the standard holomorphic volume form $\omega_0=dz_1\wedge\cdots\wedge dz_n$; this is due to Anders\'en \cite{Andersen1990}.  In this important special case, Theorem \ref{t:main} is not new.  It was proved, although not explicitly stated, by Forn\ae ss and Sibony in their groundbreaking paper~\cite{FS1997}.

\begin{theorem}[Forn\ae ss and Sibony, 1997]  \label{t:FS}
A generic volume-preserving holomorphic automorphism of $\C^n$, $n\geq 2$, is chaotic.
\end{theorem}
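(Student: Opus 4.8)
The plan is to recast the statement as a Baire category result and then feed it into Touhey's characterisation of chaos. Fix a countable base $\{V_m\}_{m\geq 1}$ for the topology of $\C^n$ consisting of nonempty open sets. For a pair $(m,m')$ let $\mathcal C_{m,m'}\subset\Aut_\omega(\C^n)$ be the set of those $f$ that possess a \emph{nondegenerate} cycle meeting both $V_m$ and $V_{m'}$, by which I mean a point $p$ and an integer $k\geq 1$ with $f^k(p)=p$, with $D(f^k)_p-I$ invertible, and with $\{p,f(p),\dots,f^{k-1}(p)\}$ meeting $V_m$ and $V_{m'}$. Since $\Aut_\omega(\C^n)$ is completely metrizable, once each $\mathcal C_{m,m'}$ is shown to be open and dense the intersection $\bigcap_{m,m'}\mathcal C_{m,m'}$ is a dense $G_\delta$, and by Touhey's criterion \cite{Touhey1997} every automorphism in it is chaotic; this proves both assertions of the theorem at once.

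Openness of $\mathcal C_{m,m'}$ is the routine half. If $f\in\mathcal C_{m,m'}$ is witnessed by a cycle of period $k$ through $p$, then the holomorphic map $(g,x)\mapsto g^k(x)-x$ has invertible $x$-differential at $(f,p)$, so the implicit function theorem produces, for every $g$ that is $C^1$-close to $f$ on a neighbourhood of the orbit, a nearby periodic point $p_g$ of period dividing $k$; by continuity its orbit still meets the open sets $V_m$ and $V_{m'}$. The only points to check are that convergence in the compact-open topology of holomorphic maps upgrades to $C^1$-convergence on compacts, which is immediate from the Cauchy estimates, and that $p_g$ can be taken with $D(g^k)_{p_g}-I$ still invertible, which holds by continuity of the spectrum.

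Density is the heart of the matter and the step I expect to be the main obstacle. Given $f\in\Aut_\omega(\C^n)$, a compact set $K$, a tolerance $\varepsilon>0$, and the target sets $V_m,V_{m'}$, I must manufacture a volume-preserving automorphism $g$ with $\sup_K|g-f|<\varepsilon$ carrying a nondegenerate cycle through $V_m$ and $V_{m'}$. One cannot in general keep $g=f$ on $K$ and route an orbit through $V_m$ and $V_{m'}$, because the relevant orbit may be trapped in $K$ by $f$: already $f(z)=Az$ with $A$ unitary leaves every ball invariant, so the perturbation is genuinely forced to act \emph{inside} $K$. The resolution is a closing-lemma-type construction: a divergence-free vector field of sup-norm $O(\varepsilon)$, integrated along a long pseudo-cycle, accumulates a macroscopic displacement that threads an orbit from a chosen point of $V_m$, through $V_{m'}$, and back, closing it up transversally. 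The two engines are the infinite transitivity of $\Aut_\omega(\C^n)$, used to lay down the finitely many interpolation data of the desired cycle, and Anders\'en--Lempert theory---available since $\C^n$ has the volume density property by Anders\'en's theorem \cite{Andersen1990}---used to realise the volume-preserving isotopy effecting this perturbation by a genuine global automorphism $g$ while controlling $\sup_K|g-f|$.

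The delicate points inside the density step are therefore twofold: choosing the pseudo-cycle so that its correction stays within the $\varepsilon$-budget on $K$ while still visiting $V_m$ and $V_{m'}$, and arranging the final closing so that $D(g^k)_p-I$ is invertible, so that the cycle produced actually lies in the open set $\mathcal C_{m,m'}$ rather than merely in its closure. Both would be handled by a genericity argument, perturbing the interpolating isotopy within the Anders\'en--Lempert approximation. At this stage I would follow Forn\ae ss and Sibony's original treatment \cite{FS1997} closely.
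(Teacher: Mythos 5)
Your framing---a countable base, the open sets $\mathcal C_{m,m'}$ of automorphisms with a transverse cycle through $V_m$ and $V_{m'}$, Baire category, and Touhey's criterion---is exactly the right one, and your openness argument is fine. The gap is in the density step. You correctly observe that an orbit of $f$ may be trapped in $K$, but your proposed remedy---a divergence-free field of sup-norm $O(\varepsilon)$ whose effect, ``integrated along a long pseudo-cycle, accumulates a macroscopic displacement that threads an orbit \dots and back, closing it up transversally''---is the \emph{statement} of a closing lemma, not an argument. Closing a pseudo-orbit inside $K$ by perturbations within an $\varepsilon$-budget is precisely the notoriously delicate part of closing-lemma technology, and nothing in Anders\'en--Lempert theory or infinite transitivity delivers it: those tools give complete freedom to prescribe finitely many jets \emph{outside} a polynomially convex set on which you want $g\approx f$, but no control over how a long orbit segment confined to $K$ responds to a small perturbation supported there.

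The actual resolution is a two-step reduction that is absent from your proposal. Step one is Forn\ae ss and Sibony's Theorem 3.1 (generalised here as Theorem \ref{t:expelling}): a generic $f\in\Aut_\omega(\C^n)$ is expelling, i.e.\ the points with unbounded forward orbit are dense. This is where all the hard work lives, and it is proved not by an explicit perturbation but by contradiction: a robustly non-expelling automorphism would generate a compact abelian group acting on a bounded invariant domain, forcing an $n$-dimensional totally real invariant torus on which the exact form $\omega$ would have to be simultaneously nonvanishing and zero. Step two: once $f$ and $f^{-1}$ are expelling, one picks $p\in V_m$ and $q\in V_{m'}$ whose forward and backward orbits leave $f(K)$, and uses Varolin's jet-interpolation theorem \cite[Theorem 2]{Varolin2000} to build $g$ close to the identity on $f(K)$ sending $f^{n_1+1}(p)\mapsto f^{-m_2}(q)$ and $f^{m_1+1}(q)\mapsto f^{-n_2}(p)$, with derivatives chosen so that the resulting cycle is transverse; all the surgery happens outside $f(K)$, so there is no budget conflict at all. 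Without step one your density argument does not get off the ground, and step two is a finite-point interpolation, not an accumulation along a pseudo-orbit.
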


In particular, $\C^n$, $n\geq 2$, has a chaotic automorphism.  We do not know an explicit example.  By conjugating a chaotic volume-preserving automorphism, we see that $\C^n$ has chaotic automorphisms that do not preserve volume.  It is an interesting open problem to describe the closure of the set of chaotic automorphisms in $\Aut(\C^n)$.

In Section 2 of \cite{FS1997}, Forn\ae ss and Sibony prove that a generic biholomorphic symplectomorphism of $\C^{2k}$, $k\geq 1$, is chaotic (their Remark 2.5 and Proposition 2.7).  A similar proof works for volume-preserving automorphisms of $\C^n$, $n\geq 2$.  Let us call an automorphism \textit{expelling} if the $G_\delta$ set of points with unbounded forward orbit is dense.  Clearly, an automorphism with a dense forward orbit is expelling, so a chaotic automorphism is expelling.  The main step in Forn\ae ss and Sibony's proof of Theorem \ref{t:FS} is their Theorem 3.1, which says that a generic volume-preserving automorphism of $\C^n$ is expelling.  In their Remark 3.5, they point out that the proof of their Lemma 2.3 in the symplectic case can be adapted to the volume-preserving case.  The proof of Theorem \ref{t:FS} then goes through as in the symplectic case.

Peters, Vivas, and Wold showed that a generic volume-preserving automorphism of $\C^n$, $n\geq 2$, has a hyperbolic fixed point whose stable manifold is dense in $\C^n$ \cite[Theorem 1.1]{PVW2008}.  This property can be interpreted as a kind of chaos.  Their proof relies on \cite[Theorem 3.1]{FS1997} and may be generalised to the setting of Theorem \ref{t:main}.  (The proof of \cite[Theorem 1.1]{PVW2008} is not entirely correct.  We thank the authors for kindly showing us how to correct it.)

\begin{theorem}  \label{t:dense-stable-manifold}
Let $X$ be a Stein manifold of dimension at least $2$ satisfying the volume density property with respect to an exact holomorphic volume form.  A generic volume-preserving holomorphic automorphism of $X$ admits a hyperbolic fixed point whose stable manifold is dense in $X$.
\end{theorem}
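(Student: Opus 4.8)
The plan is to argue by Baire category, working not in $\Aut_\omega(X)$ directly but in the total space of hyperbolic fixed points. Let $\mathcal P\subseteq\Aut_\omega(X)\times X$ be the set of pairs $(f,p)$ with $f(p)=p$ and $Df_p$ hyperbolic, that is, with no eigenvalue of modulus $1$. In local coordinates in which $\omega$ is standard, $f^*\omega=\omega$ forces the Jacobian determinant of $f$ to be $1$, so $Df_p\in\mathrm{SL}_n(\C)$; for $n\geq2$ such a matrix can be hyperbolic (for instance with eigenvalues $\lambda,\lambda^{-1}$ when $n=2$), which is precisely where the dimension hypothesis enters. Since $1$ is not an eigenvalue of $Df_p$, the implicit function theorem shows that $\pi\colon\mathcal P\to\Aut_\omega(X)$, $(f,p)\mapsto f$, is a local homeomorphism: nearby automorphisms have a unique nearby fixed point, depending continuously on the map and remaining hyperbolic. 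Fix a countable basis $\{V_m\}$ of nonempty open subsets of $X$ and put $\mathcal B_m=\{(f,p)\in\mathcal P: W^s(p)\cap V_m\neq\emptyset\}$, where $W^s(p)=\{x: f^k(x)\to p\}$. A pair lies in $\bigcap_m\mathcal B_m$ exactly when $W^s(p)$ is dense, so it suffices to show that $\pi\bigl(\bigcap_m\mathcal B_m\bigr)$ contains a dense $G_\delta$ subset of $\Aut_\omega(X)$.

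I would reduce this to two assertions: (i) the set $\mathcal H=\pi(\mathcal P)$ of automorphisms with a hyperbolic fixed point is open and dense; and (ii) each $\mathcal B_m$ is open and dense in $\mathcal P$. The passage from these to the theorem is formal. Openness of $\mathcal H$ is the persistence noted above; for density, given $f$, a compact set $K$ and $\epsilon>0$, Anders\'en--Lempert theory---available thanks to the volume density property---produces a volume-preserving automorphism that is $\epsilon$-close to $f$ on $K$ and has a prescribed hyperbolic fixed point with $\mathrm{SL}_n$ linear part at a point outside $K$, by the usual approximation with interpolation. For the projection, cover the second-countable space $\mathcal P$ by countably many open sets on which $\pi$ is a homeomorphism; if each $\mathcal B_m$ is open and dense, then each $\mathcal P\setminus\mathcal B_m$ is closed and nowhere dense, so its image under $\pi$ is meager in $\mathcal H$. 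Hence $\mathcal H\setminus\bigcup_m\pi(\mathcal P\setminus\mathcal B_m)$ is residual in the open set $\mathcal H$, and every $f$ in it has $\pi^{-1}(f)\subseteq\bigcap_m\mathcal B_m$; since $\mathcal H$ is open and dense, this set is a dense $G_\delta$ in $\Aut_\omega(X)$ consisting of automorphisms with the desired property.

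Openness of $\mathcal B_m$ is standard hyperbolic dynamics, made easier by holomorphy. If $(f,p)\in\mathcal B_m$ with witness $y\in V_m$, then some iterate $f^N(y)$ lies in the interior of the local stable manifold $W^s_{\mathrm{loc}}(p)$, well inside the contraction region. For $(g,q)$ near $(f,p)$, compact-open convergence of these holomorphic maps forces convergence of their derivatives on slightly smaller sets by the Cauchy estimates, so $W^s_{\mathrm{loc}}(q)$ varies continuously and $g^N(y)$ remains inside it; thus $y\in W^s(q)\cap V_m$ and $(g,q)\in\mathcal B_m$.

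The heart of the matter, and the step I expect to be the main obstacle, is the density of $\mathcal B_m$ in $\mathcal P$. Fix $(f,p)\in\mathcal P$, a number $\epsilon>0$, and a compact $\O(X)$-convex set $K$ containing $p$ and $V_m$. Since expelling automorphisms are dense---by the generalisation to $X$ of \cite[Theorem 3.1]{FS1997}---and we are proving a density statement, we may assume, after a small perturbation keeping $p$ fixed and hyperbolic, that $f$ is expelling; this yields a point $y\in V_m$ whose forward orbit first leaves $K$ at some time $N$, so that $w_0:=f^N(y)\notin K$ while $f^j(y)\in K$ for $j<N$. Now I would invoke Anders\'en--Lempert theory, available because of the volume density property, to construct a volume-preserving automorphism $\theta$ that is $\epsilon$-close to the identity on $f(K)$ and maps $f(w_0)$ to a single prescribed point near $p$ (a finite interpolation). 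Put $g=\theta\circ f$; it is $\epsilon$-close to $f$ on $K$ and has a hyperbolic fixed point $p'$ near $p$ by persistence. Treating the interpolation value as a parameter, the implicit function theorem lets me arrange $g^{N+1}(y)=p'$ exactly, so that $y$ maps to the fixed point in finitely many steps and $(g,p')\in\mathcal B_m$. The genuinely delicate points are that holomorphic automorphisms admit no compactly supported modifications, so every adjustment must be a global Anders\'en--Lempert approximation with carefully controlled error, and---logically prior---the transfer of the expelling genericity of \cite[Theorem 3.1]{FS1997} (compare their Remark 3.5) from $\C^n$ to $X$, which I would carry out by the same Anders\'en--Lempert methods used in the proof of Theorem \ref{t:main}.
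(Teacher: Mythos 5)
Your overall framework -- a Baire argument over the space of pairs $(f,p)$ with $p$ a persistent hyperbolic fixed point, reducing everything to openness and density of the condition that the stable manifold meets a basic open set $V_m$ -- is sound and runs parallel to the paper's, which instead tracks a hyperbolic fixed point $\eta(f)$ on countably many disjoint open sets $B_j\subset\Aut_\omega(X)$ and intersects the sets $U(q,\epsilon)=\{f: d(\Sigma_f(\eta(f)),q)<\epsilon\}$. The openness step is also fine. The genuine gap is in the density of $\mathcal B_m$, in two places.

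First, the Anders\'en--Lempert move you propose is inadmissible as stated: you ask for $\theta$ close to the identity on $f(K)$, where $K$ contains $p$, while moving $f(w_0)$ to a prescribed point near $p$, that is, \emph{into} $f(K)$. The isotopy method requires the track $\varphi_t(V)$ of the moving point to remain disjoint from the set $W$ on which the identity is approximated, with $W\cup\varphi_t(V)$ kept $\O(X)$-convex throughout; you cannot land inside $W$. Second, even granting the move, volume preservation forces the fixed point to be non-attracting, so the stable manifold has positive codimension and you must hit it (or the fixed point) \emph{exactly}; your appeal to ``the implicit function theorem with the interpolation value as a parameter'' presupposes a continuous family $\theta_z$ of interpolating automorphisms, which Varolin's interpolation theorem and Anders\'en--Lempert approximation do not supply, and the equation $g^{N+1}(y)=p'$ is moreover circular because $p'$ itself depends on $\theta$. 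The paper's proof sidesteps both difficulties at once: since $\Sigma_{\tilde f}(\eta(\tilde f))$ is biholomorphic to $\C^{m_j}$ and hence not relatively compact, it contains a point $x$ \emph{outside} $K$ whose forward orbit stays in $K$; the escaped orbit point is moved to $x$ (an outside-to-outside move, which the isotopy method permits), and exact hitting is replaced by the observation that the image $g^{M+N+1}(U)$ of an open neighbourhood $U$ of the chosen point contains a fixed open set $Z$ around $\tilde f^M(x)$ which must meet the local stable graph $\Gamma_g(\delta)$, because these graphs converge in the Hausdorff distance as $g\to\tilde f$. You would need to incorporate both devices to close your density argument.
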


Peters, Vivas, and Wold point out that polynomial automorphisms with an attracting fixed point at infinity are dense in $\Aut_{\omega_0}(\C^n)$ \cite[Remark 3.1]{PVW2008}.  It follows that the residual subset of chaotic automorphisms has empty interior in $\Aut_{\omega_0}(\C^n)$.  In \cite[Remark 3.1]{PVW2008}, they also point out that since a polynomial automorphism of $\C^2$ either has trivial dynamics or is conjugate by a polynomial automorphism to a composition of H\'enon maps, in which case it has an attracting fixed point at infinity, it cannot have a dense attracting set.  For the same reason, a chaotic automorphism of $\C^2$ cannot be polynomial.

\subsection{}
Let us recall some results and examples from Anders\'en-Lempert theory in order to clarify the scope of Theorem \ref{t:main}.  For more details, see \cite[Section 4.10]{Forstneric2017} and \cite{KK2011}.

Most known examples of Stein manifolds with the volume density property are affine algebraic manifolds with the algebraic volume density property.  (The first non-algebraic examples were found only very recently by Ramos-Peon \cite{Ramos-Peon2017}.)  An affine algebraic manifold $X$ with an algebraic volume form $\omega$ is said to have the algebraic volume density property with respect to $\omega$ if the complete algebraic vector fields with vanishing $\omega$-divergence generate the Lie algebra of algebraic vector fields on $X$ with vanishing $\omega$-divergence.  (The time maps of a complete algebraic vector field are in general not algebraic, only holomorphic.)  The algebraic volume density property implies the volume density property.  

The known ways to obtain new manifolds with the volume density property or the algebraic volume density property from old are very limited.  The product of two affine algebraic manifolds with the algebraic volume density property has the algebraic volume density property (with respect to the product volume form), and the product of two Stein manifolds with the volume density property has the volume density property.  The former result is not trivial and the latter is a deep theorem of Kaliman and Kutzschebauch; see \cite[Section 4]{KK2011}.  It follows that if $Y$ is a Stein manifold with the volume density property, then $X=Y\times\C$ is covered by Theorem \ref{t:main}. 

Most known examples of manifolds with the algebraic volume density property are captured by the following theorem of Kaliman and Kutzschebauch \cite[Theorem 1.3]{KK2017}.

\begin{theorem}[Kaliman and Kutzschebauch, 2017]  \label{t:affine}
Let $X$ be an affine homogeneous space of a linear algebraic group $G$.  Suppose that $X$ has a $G$-invariant algebraic volume form $\omega$.  Then $X$ has the algebraic volume density property with respect to $\omega$.
\end{theorem}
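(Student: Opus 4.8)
The plan is to reduce the algebraic volume density property to a concrete generation statement and then to verify that statement using the abundance of complete divergence-free fields supplied by the $G$-action. Write $n=\dim X$ and let $\mathcal A_\omega$ be the Lie algebra of algebraic vector fields on $X$ of vanishing $\omega$-divergence; the goal is to show that the Lie subalgebra $L\subseteq\mathcal A_\omega$ generated by the complete members of $\mathcal A_\omega$ is all of $\mathcal A_\omega$. It is convenient to pass to the dual description $v\mapsto v\rfloor\omega$, under which $\mathcal A_\omega$ is identified with the closed algebraic $(n-1)$-forms and the Lie bracket becomes $(v\rfloor\omega,\,w\rfloor\omega)\mapsto d\bigl(v\rfloor(w\rfloor\omega)\bigr)$; in this language generating $\mathcal A_\omega$ means generating all closed $(n-1)$-forms by exterior derivatives of contractions. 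The main point to keep in mind is that $\mathcal A_\omega$ is \emph{not} a $\C[X]$-module, so the module-theoretic machinery used for the (non-volume) density property must be applied with care.

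First I would assemble the raw supply of complete fields. The infinitesimal action of $G$ produces algebraic vector fields on $X$; those coming from one-parameter unipotent (root) subgroups are locally nilpotent derivations, and those coming from subtori are semisimple fields, and all of them are complete. Because $\omega$ is $G$-invariant, every such field has vanishing $\omega$-divergence, so they all lie in $\mathcal A_\omega$. The decisive enlargement is \emph{shearing}: if $\delta$ is a complete divergence-free locally nilpotent field and $h\in\ker\delta$, then $h\delta$ is again complete, and $\operatorname{div}_\omega(h\delta)=\delta(h)+h\operatorname{div}_\omega\delta=0$, so $h\delta\in\mathcal A_\omega$ as well; an analogous (replica) construction applies to the semisimple fields. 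Thus $L$ contains all such shears of the basic group fields.

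Next I would extract a large $\C[X]$-submodule from $L$ by bracketing shears. For $h_1\in\ker\delta_1$ and $h_2\in\ker\delta_2$ the commutator identity gives $[h_1\delta_1,\,h_2\delta_2]=h_1h_2[\delta_1,\delta_2]+h_1\delta_1(h_2)\,\delta_2-h_2\delta_2(h_1)\,\delta_1$, so by forming suitable linear combinations $L$ comes to contain fields of the form $h_1h_2[\delta_1,\delta_2]$ together with the individual correction terms. The aim is that, as $(\delta_1,\delta_2)$ ranges over the available pairs and $(h_1,h_2)$ over their kernels, these products fill out enough of the coefficient ring to force $L$ to contain every divergence-free field. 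This is where the notion of a \emph{(semi-)compatible pair} enters: one needs pairs of complete divergence-free fields whose kernels, multiplied together, span a subalgebra of $\C[X]$ of full quotient field, and one needs the brackets $[\delta_1,\delta_2]$, together with the original fields, to span the tangent space at a generic point. Given enough compatible pairs, a now-standard argument shows that the submodule generated inside $L$ contains $\mathcal A_\omega$ near a generic point; and here \emph{homogeneity is essential}, since transitivity of the $G$-action propagates the generic generation statement to every point of $X$, and a globalization step then upgrades it to the genuine equality $L=\mathcal A_\omega$.

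The hard part will be this last verification in full generality: showing that \emph{every} affine homogeneous space of \emph{every} linear algebraic group admitting an invariant volume form carries enough compatible pairs of complete divergence-free fields. This is a structural problem about linear algebraic groups. I would attack it through the Levi decomposition $G=Q\ltimes R_u(G)$ with $Q$ reductive, treating the semisimple, torus, and unipotent constituents and their interaction via the root-space decomposition of a semisimple Levi factor, and analysing the isotropy subgroup $H$ to understand which invariant functions survive on $X=G/H$. The low-dimensional and small-rank configurations, where there are too few root subgroups to manufacture compatible pairs, have to be singled out and handled by hand, and it is the uniform treatment of all these cases, rather than any single computation, that constitutes the real work.
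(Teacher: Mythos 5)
First, a point of reference: the paper does not prove this statement at all. It is quoted verbatim from Kaliman and Kutzschebauch \cite[Theorem 1.3]{KK2017} and used as a black box to supply examples for Theorem \ref{t:main}. So there is no in-paper proof to compare against; your proposal must be judged as a free-standing argument, and as such it is an outline of the known strategy rather than a proof. You correctly identify the right ingredients -- the infinitesimal $G$-action gives complete divergence-free fields, shears $h\delta$ with $h\in\ker\delta$ stay complete and divergence-free (your divergence computation $\operatorname{div}_\omega(h\delta)=\delta(h)+h\operatorname{div}_\omega\delta$ is right), and the bracket identity for shears is the engine for producing products $h_1h_2[\delta_1,\delta_2]$. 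But the entire mathematical content of the theorem lies in the step you explicitly defer: exhibiting, on an \emph{arbitrary} affine homogeneous space $G/H$ carrying an invariant form, enough (semi-)compatible pairs whose kernels generate a large enough subring of $\C[X]$ and whose brackets span the tangent space at a point. That is a long structural analysis of Levi decompositions, root subgroups, and isotropy groups, including genuinely exceptional low-rank cases, and announcing that one "would attack it" is not a proof of it.

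There is also a more specific technical gap in your reduction. Because $\mathcal A_\omega$ is not a $\C[X]$-module (as you note), generating a large $\C[X]$-submodule of vector fields inside $L$ does not by itself yield $L=\mathcal A_\omega$. In the dual picture $v\mapsto v\rfloor\omega$, the target is the space of \emph{closed} algebraic $(n-1)$-forms, and the bracket only produces \emph{exact} ones, namely $d\bigl(v\rfloor(w\rfloor\omega)\bigr)$. One must therefore separately show that the forms $v\rfloor\omega$ for $v\in L$ hit representatives of every class in the algebraic de Rham cohomology $H^{n-1}(X)$; this cohomological step is an essential and nontrivial part of the Kaliman--Kutzschebauch criterion for the \emph{volume} density property (it is exactly the point where the volume case diverges from the ordinary density property), and your sketch omits it entirely. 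Transitivity of the $G$-action propagates pointwise spanning statements, but it does not dispose of this global obstruction.
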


If $\omega$ is exact, say if $H^n(X)=0$, where $n=\dim X\geq 2$, then $X$ is captured by Theorem \ref{t:main}.

By Theorem \ref{t:affine}, every connected linear algebraic group has the algebraic volume density property with respect to a left- or right-invariant Haar form (this was first proved as \cite[Theorem 2]{KK2010}).  For example, $(\C^*)^n$ has the algebraic volume density property with respect to the volume form $(z_1\cdots z_n)^{-1}dz_1\wedge\cdots\wedge dz_n$.  Every connected linear algebraic group is the semidirect product of its unipotent radical and a maximal reductive subgroup.  The exponential map of a unipotent group is a biholomorphism.  It follows that a connected non-reductive linear algebraic group $G$ of dimension $n\geq 2$ has $H^n(G)=0$ and is covered by Theorem \ref{t:main}.  On the other hand, a bi-invariant Haar form on a reductive group (such as $(z_1\cdots z_n)^{-1}dz_1\wedge\cdots\wedge dz_n$ on $(\C^*)^n$) is not exact \cite[Proposition 8.6]{KK2016}, so Theorem \ref{t:main} does not capture any reductive groups.

Leuenberger \cite{Leuenberger2016} produced new examples of algebraic hypersurfaces in affine space with the volume density property, including the famous Koras-Russell cubic
\[ C=\{ (x,y,z,w)\in\C^4 : x^2y+x+z^2+w^3=0 \} \]
with the volume form $x^{-2}dx\wedge dz\wedge dw$.  It is known that $C$ is diffeomorphic to $\mathbb R^6$ but not algebraically isomorphic to $\C^3$ (in fact, the algebraic automorphism group does not act transitively on $C$).  Whether $C$ is biholomorphic to $\C^3$ is an open question.  By Theorem \ref{t:main}, $C$ has many chaotic volume-preserving holomorphic automorphisms.

We do not know whether the assumption on the volume form in Theorem \ref{t:main} can be removed.  In particular, it is an open question whether chaos is generic for volume-preserving holomorphic automorphisms of reductive groups.  If chaotic automorphisms are not dense, it will not be for purely topological reasons.  Namely, let $X$ be a Stein manifold of dimension at least $2$ with a holomorphic volume form $\omega$.  Recall that $X$ has only one end.  By a theorem of Alpern and Prasad \cite[Theorem 3]{AP2000}, chaotic homeomorphisms are dense with respect to the compact-open topology among homeomorphisms of $X$ that preserve the real volume form $\omega\wedge\bar\omega$.

\subsection{}
The remainder of the paper is organised as follows.  In Section \ref{sec:perturbations}, guided by Varolin's proof of his \cite[Lemma 3.2]{Varolin2000}, we prove a preparatory theorem (Theorem \ref{t:perturbations}) on small volume-preserving perturbations of the identity with prescribed jets at finitely many points.

In Section \ref{sec:expelling}, we generalise Forn\ae ss and Sibony's proof of their \cite[Theorem 3.1]{FS1997} to the setting of our Theorem \ref{t:main} and establish the following result.

\begin{theorem}  \label{t:expelling}
Let $X$ be a Stein manifold satisfying the volume density property with respect to an exact holomorphic volume form.  A generic volume-preserving holomorphic automorphism of $X$ is expelling, meaning that the $F_\sigma$ set of points with relatively compact forward orbit is nowhere dense.
\end{theorem}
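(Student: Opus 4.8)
The plan is to realise the expelling automorphisms as a residual subset of $\Aut_\omega(X)$ by a Baire category argument, with the escape of one fixed small ball to infinity as the single nontrivial ingredient. First I would fix a compact exhaustion $K_1\subseteq K_2\subseteq\cdots$ of $X$ and a countable base $\{U_j\}_{j\geq 1}$ of nonempty open sets, and for each $j$ choose once and for all a nonempty open $V_j$ with compact closure $\overline{V_j}\subseteq U_j$. For $j,m\geq 1$ set
\[ D_{j,m}=\bigl\{\, f\in\Aut_\omega(X) : f^N(\overline{V_j})\cap K_m=\emptyset \text{ for some } N\geq 0 \,\bigr\}. \]
The reduction is that $R=\bigcap_{j,m}D_{j,m}$ consists of expelling automorphisms. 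Indeed, if $f\in R$ then for every $j$ and every $m$ some iterate $f^N(\overline{V_j})$ misses $K_m$, whence every $x\in V_j$ satisfies $f^N(x)\notin K_m$; as $m$ is arbitrary, the forward orbit of $x$ meets the complement of every $K_m$ and is therefore not relatively compact. Given any nonempty open $U\subseteq X$, choosing $U_j\subseteq U$ exhibits the nonempty open set $V_j\subseteq U$ disjoint from the set of points with relatively compact forward orbit, so that set is nowhere dense. Since $\Aut_\omega(X)$ is completely metrizable and hence a Baire space, it remains only to show that each $D_{j,m}$ is open and dense.

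Openness is immediate: the iteration map $g\mapsto g^N$ is continuous for the compact-open topology, so if the disjoint compacta $f^N(\overline{V_j})$ and $K_m$ are separated, then $g^N(\overline{V_j})\cap K_m=\emptyset$ persists for all $g$ close enough to $f$ on a compact set containing $\overline{V_j}$ together with its first $N$ iterates.

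Density is the heart of the matter, and here I would follow Forn\ae ss and Sibony's proof of their Lemma 2.3, adapted to the volume-preserving setting as in their Remark 3.5 \cite{FS1997}, replacing their explicit constructions on $\C^n$ by Theorem \ref{t:perturbations}. Fix $f$ and a basic neighbourhood, determined by a compact set $L$ (enlarged to contain $K_m$ and $\overline{V_j}$) and a tolerance $\varepsilon>0$; the goal is a volume-preserving $g$ with $\sup_L d(g,f)<\varepsilon$ and some iterate $g^N(\overline{V_j})$ outside $K_m$. I would build $g$ as $f$ composed with a finite product of volume-preserving maps close to the identity, furnished by Theorem \ref{t:perturbations} with prescribed jets at finitely many points, arranged so as to carry a small sub-ball of $V_j$ along a chain of pairwise distinct points drifting out of $K_m$; once the orbit has left $L$ the perturbations are unconstrained and the image may be pushed rapidly to infinity, using that $X$ has a single end. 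The main obstacle is precisely this transport: because $g$ must stay $\varepsilon$-close to $f$ on $L$, the large compact set $K_m\subseteq L$ cannot be crossed in boundedly many steps, so the ball must be conveyed across $L$ by many small, mutually consistent nudges applied at fresh (hence non-conflicting) orbit points, the delicate part being to thread such a non-recurrent escaping tube volume-preservingly — for which Theorem \ref{t:perturbations} together with the infinite transitivity and one-endedness of $X$ supply exactly the room required.

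With openness and density of every $D_{j,m}$ established, the Baire category theorem shows that $R=\bigcap_{j,m}D_{j,m}$ is a dense $G_\delta$ subset of $\Aut_\omega(X)$, and by the reduction above every member of $R$ is expelling; that is, its $F_\sigma$ set of points with relatively compact forward orbit is nowhere dense. This is the assertion of the theorem.
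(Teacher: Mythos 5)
There are two genuine problems with your proposal. The first is that your Baire reduction is set up with the wrong quantifier and the sets $D_{j,m}$ are in fact \emph{not} dense, so the scheme cannot be completed as stated. If $f$ has a transverse periodic cycle passing through $V_j$ and contained in the interior of $K_m$ (and by Theorem \ref{t:main} and its proof in Section \ref{sec:proof}, automorphisms with a transverse cycle through $V_j$ form a dense open set, so such $f$ exist in abundance once $m$ is large enough to swallow the cycle), then every iterate $f^N(\overline{V_j})$ contains a point of that cycle and hence meets $K_m$; since transverse cycles persist under perturbation, a whole neighbourhood of $f$ avoids $D_{j,m}$. Demanding that the entire ball leave $K_m$ simultaneously is too strong. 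The condition has to be $f^N(\overline{V_j})\not\subseteq K_m$ for some $N$, i.e.\ that \emph{some} point of the ball escapes; and then relating ``every point of $\overline{V_j}$ has relatively compact orbit'' to ``some sub-ball has uniformly bounded orbit'' requires a further Baire argument on the compact set $\overline{V_j}=\bigcup_k(K_f^k\cap\overline{V_j})$. This is exactly how Claim \ref{c:Baire-arguments} and the paragraph following it are organised: the failure of density is converted into a nonempty open set $U_k\subset X\times\Aut_\omega(X)$ on which all forward iterates stay in $\Lambda_k$, i.e.\ into a robustly non-expelling automorphism $f_0$ with a relatively compact forward-invariant open set $A_{f_0}$.

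The second and more serious problem is that the density step --- which you correctly identify as the heart of the matter --- is only gestured at, and the strategy you sketch fails precisely in the situation that actually has to be excluded. Your plan is to convey a small ball out of $K_m$ by many small perturbations applied at ``fresh (hence non-conflicting) orbit points''; but when density fails, the orbit of the ball is confined to a relatively compact completely invariant domain $\Omega$ on which the closure of $\{f_0^j\}$ is a compact abelian Lie group $G$, so the orbit points are recurrent, not fresh, and each nudge interferes with all later returns. The paper's proof has to confront this head-on: it shows the $G$-orbits are totally real, extracts an $\O(X)$-convex orbit $Gq$ via a peak-point and Zorn argument, uses the \emph{exactness of $\omega$} to rule out $\dim Gq=n$ (Claim \ref{c:dim-n-ruled-out}), then uses a cohomological argument plus the volume density property to approximate the generating vector field $\xi$ of a one-parameter subgroup by a sum of complete divergence-free fields, whose composed flows produce a perturbation violating Cauchy estimates. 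Your proposal never uses the exactness hypothesis at all; since the paper states explicitly that removing it is an open problem (a robustly non-expelling automorphism would have to have an $n$-dimensional totally real torus orbit, Theorem \ref{t:strange-orbit}), an argument that does not invoke exactness cannot be correct. Invoking Theorem \ref{t:perturbations} and the Anders\'en--Lempert machinery is the right toolbox, but the actual mechanism by which the confinement is contradicted is missing.
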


We do not know whether the assumption that the volume form is exact can be removed.  Let us call an automorphism $f$ in $\Aut_\omega(X)$ \textit{robustly non-expelling}\footnote{We thank an anonymous referee for suggesting this term.} if there is a neighbourhood $W$ of $f$ in $\Aut_\omega(X)$, a nonempty open subset $V$ of $X$, and a compact subset $K$ of $X$ such that $g^j(V)\subset K$ for all $g\in W$ and $j\geq 0$.  The proof of Theorem \ref{t:expelling} shows that if expelling automorphisms are not generic in $\Aut_\omega(X)$, then $X$ has a robustly non-expelling automorphism (the converse is obvious).  In Section \ref{sec:robustly-non-expelling}, we present some curious consequences of the existence of such an automorphism.  We hope that these results may help construct examples or prove non-existence.  By Theorem \ref{t:strange-orbit}, a robustly non-expelling automorphism must possess a very peculiar orbit.  By Theorem \ref{t:cohomological-condition}, under a cohomological assumption that is satisfied for example by reductive non-semisimple groups such as $(\C^*)^n$, if expelling automorphisms are not generic, then there is a nonempty open subset $U$ of $\Aut_\omega(X)$ such that for every $f\in U$, there is $s\geq 2$ such that the $s$-th power map $\Aut_\omega(X) \to \Aut_\omega(X)$ is not open at $f$.  By contrast, for a Lie group, each power map is open outside a proper subvariety, so at a generic point they are all open.

In Section \ref{sec:proof}, we use \cite[Theorem 2]{Varolin2000} to reduce Theorem \ref{t:main} to Theorem \ref{t:expelling}.  Finally, in Section \ref{sec:dense-stable-manifolds}, we show how to generalise Peters, Vivas, and Wold's \cite[Theorem 1.1]{PVW2008} to Theorem \ref{t:dense-stable-manifold} using Theorem \ref{t:expelling}.

\section{Perturbations of the identity}
\label{sec:perturbations}

\noindent
Let $X$ be a complex manifold, $\mathscr X_\O(X)$ be the Lie algebra of holomorphic vector fields on $X$, and $J^k(X)$ be the complex manifold of $k$-jets of holomorphic maps from open subsets of $X$ into $X$.  We denote the $k$-jet of a map $f$ at $x\in X$ by $j_x^k(f)$.  Each $v\in\mathscr X_\O(X)$ with flow $\varphi_v^t$ induces $p_k(v)\in \mathscr X_\O(J^k(X))$ with flow
\[ \varphi_{p_k(v)}^t(j_x^k(f)) = j_x^k(\varphi_v^t\circ f). \] 
The map $p_k$ sends complete vector fields to complete vector fields and defines a continuous monomorphism of Lie algebras $p_k:\mathscr X_\O(X) \to \mathscr X_\O(J^k(X))$.

If $\mathfrak g$ is a Lie subalgebra of $\mathscr X_\O(X)$, then $p_k(\mathfrak g)$ is a Lie subalgebra of $\mathscr X_\O(J^k(X))$.  If $\mathfrak g$ has the density property, meaning that the Lie subalgebra of $\mathfrak g$, or equivalently the vector subspace of $\mathfrak g$, generated by the complete fields in $\mathfrak g$ is dense in $\mathfrak g$, then $p_k(\mathfrak{g})$ has the density property as well.

A holomorphic automorphism $\psi$ of $X$ induces a holomorphic automorphism $\psi_\#$ of $J^k(X)$ in the obvious way: $\psi_\#j_x^k(f)=j_x^k(\psi\circ f)$.  If $v$ is a complete holomorphic vector field on $X$, then
\[ (\varphi_v^t)_\#=\varphi_{p_k(v)}^t. \]
Denote by $\Aut_{\mathfrak g}(X)$ the subgroup of $\Aut(X)$ generated by the time maps of complete vector fields in $\mathfrak g$.  Then 
\[ \Aut_{\mathfrak g}(X)_\#=\Aut_{p_k(\mathfrak{g})}(J^k(X)). \]

The orbit of $\mathfrak g$ through $p\in X$, denoted $\mathscr{R}_\mathfrak{g}(p)$, consists of all points in $X$ of the form $\varphi_{v_m}^1\circ \cdots\circ \varphi_{v_1}^1(p)$, where $v_1, \dots , v_m \in \mathfrak{g}$ and the expression makes sense.  By the orbit theorem \cite[Theorem 1, p.~33]{Jurdjevic1997}, $\mathscr{R}_\mathfrak{g}(p)$ is an immersed complex submanifold of $X$.

For the remainder of the section, let $X$ be an $n$-dimensional Stein manifold, $n\geq 2$, with a holomorphic volume form $\omega$, and let $\mathfrak{g}$ be the Lie algebra of holomorphic vector fields on $X$ of $\omega$-divergence zero.  Assume that $X$ has the volume density property with respect to $\omega$, that is, that $\mathfrak g$ has the density property.

The next result is essentially \cite[Lemma 3.2]{Varolin2000}.

\begin{proposition}
Let $x_0\in X$ and $U$ be a neighbourhood of $\id_X$ in $\Aut_\omega(X)$.  There is a neighbourhood $V$ of $j^k_{x_0}(\id_X)$ in $\mathscr{R}_{p_k({\mathfrak{g}})}(j^k_{x_0}(\id_X))$ such that for all $\gamma \in V$, there is $h_\gamma\in U$ with $j^k_{x_0}(h_\gamma)=\gamma$.
\end{proposition}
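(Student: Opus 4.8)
The plan is to parametrise a neighbourhood of $p:=j^k_{x_0}(\id_X)$ in the orbit $\mathscr{R}:=\mathscr{R}_{p_k(\mathfrak{g})}(p)$ by a composition of flows of finitely many \emph{complete} fields in $\mathfrak g$, and then to read off $h_\gamma$ directly from this parametrisation. The first and decisive step is to compute $T_p\mathscr{R}$. Since $J^k(X)$ is finite-dimensional and $p_k(\mathfrak g)$ is a Lie subalgebra of $\mathscr X_\O(J^k(X))$, the orbit theorem makes $\mathscr{R}$ a finite-dimensional immersed submanifold with $T_p\mathscr{R}=\{w(p):w\in p_k(\mathfrak g)\}$. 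Let $\mathfrak g_0\subseteq\mathfrak g$ be the vector subspace generated by the complete fields; by the volume density property it is dense in $\mathfrak g$. Continuity of $p_k$ and of evaluation at $p$ then shows that $\{w(p):w\in p_k(\mathfrak g_0)\}$ is a dense linear subspace of the finite-dimensional space $T_p\mathscr{R}$, hence equals it. As $\mathfrak g_0$ is spanned by complete fields, I can therefore choose complete fields $v_1,\dots,v_d\in\mathfrak g$, with $d=\dim_\C\mathscr{R}$, so that $p_k(v_1)(p),\dots,p_k(v_d)(p)$ form a basis of $T_p\mathscr{R}$.

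Next, applying the flow identity $\varphi_{p_k(v)}^t(j_x^k f)=j_x^k(\varphi_v^t\circ f)$ repeatedly starting from $p$, I would consider the holomorphic map
\[ F(t_1,\dots,t_d)=\varphi_{p_k(v_d)}^{t_d}\circ\cdots\circ\varphi_{p_k(v_1)}^{t_1}(p)=j^k_{x_0}\bigl(\varphi_{v_d}^{t_d}\circ\cdots\circ\varphi_{v_1}^{t_1}\bigr), \]
which is defined for $t$ near $0\in\C^d$ because the $v_i$ are complete, with image in $\mathscr{R}$ and $F(0)=p$. Here $dF_0(\partial/\partial t_i)=p_k(v_i)(p)$, so $dF_0$ is injective with image $T_p\mathscr{R}$; thus $F$ is a holomorphic immersion of maximal rank $d$ near $0$ and maps a ball $B$ around $0$ biholomorphically onto a $d$-dimensional submanifold of $J^k(X)$ lying in $\mathscr{R}$, which (as $\dim\mathscr{R}=d$) is a neighbourhood of $p$ in $\mathscr{R}$. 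For $\gamma=F(t)\in F(B)$ I would set
\[ h_\gamma=\varphi_{v_d}^{t_d}\circ\cdots\circ\varphi_{v_1}^{t_1}. \]
Since each $v_i$ has $\omega$-divergence zero, every time map preserves $\omega$, so $h_\gamma\in\Aut_\omega(X)$, and by construction $j^k_{x_0}(h_\gamma)=\gamma$.

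It remains to arrange $h_\gamma\in U$. Because each complete field $v_i$ has flow holomorphic jointly in $(t,x)$, the assignment $t_i\mapsto\varphi_{v_i}^{t_i}$ is continuous into $\Aut_\omega(X)$ with the compact-open topology, and hence so is $t\mapsto h_{F(t)}$, taking the value $\id_X$ at $t=0$. Thus $t\mapsto h_{F(t)}$ pulls $U$ back to an open neighbourhood of $0$; shrinking $B$ so that it lies inside this preimage and setting $V=F(B)$ completes the argument. The main obstacle is the first paragraph: identifying $T_p\mathscr{R}$ through the orbit theorem and, crucially, upgrading the mere \emph{density} of the complete-field directions to genuine \emph{spanning} by exploiting the finite-dimensionality of $J^k(X)$. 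Once finitely many complete fields are secured, the inverse function theorem and the continuity of flows are routine.
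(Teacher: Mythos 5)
Your proof is correct and follows essentially the same route as the paper: complete fields in $\mathfrak g$ whose prolongations under $p_k$ span the tangent space to the orbit at $p$ (obtained from density of the complete-field span plus finite-dimensionality), a composition of their flows parametrising a neighbourhood of $p$ in the orbit, and continuity of $t\mapsto h_{F(t)}$ into $\Aut_\omega(X)$ to land in $U$. The only cosmetic differences are that the identity $T_p\mathscr{R}=\{w(p):w\in p_k(\mathfrak g)\}$, which you attribute to the orbit theorem, is really the Hermann--Nagano theorem for analytic fields (which the paper cites), and that the paper works with a submersion from $\C^m$ together with a local holomorphic section where you use an immersion from $\C^d$ and the inverse function theorem.
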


\begin{proof}
By the Hermann-Nagano theorem \cite[Theorem 6, p.~48]{Jurdjevic1997}, there are $v_1,\ldots,v_m\in\mathfrak g$ such that $p_k(v_1), \ldots, p_k(v_m)\in p_k({\mathfrak{g}})$ form a basis for the tangent space of $\mathscr{R}_{p_k({\mathfrak{g}})}(b)$ at $b=j^k_{x_0}(\id_X)$.  Since $\mathfrak{g}$ has the density property, we may assume that each $v_j$ is the sum of complete vector fields in $\mathfrak{g}$.  Hence we may assume that $v_1,\ldots,v_m$ are complete.

Define a continuous map 
\[ \Phi : \C^m\to \Aut_{p_k(\mathfrak{g})}(J^k(X)), \quad (t_1,\ldots, t_m)\mapsto \varphi^{t_m}_{p_k(v_m)}\circ \cdots \circ \varphi^{t_1}_{p_k(v_1)}. \]
The holomorphic map $\C^m\to\mathscr{R}_{p_k({\mathfrak{g}})}(b)$, $t\mapsto \Phi(t)(b)$, is a submersion at $b$, so it admits a holomorphic section $\sigma$ on a neighbourhood $W$ of $b$ with $\sigma(b)=0$.  The continuous map $\Psi=\Phi\circ \sigma$ takes values in $\Aut_{p_k(\mathfrak{g})}(J^k(X))$ with $\Psi(b)=\id$ and $\Psi(\gamma)(b)=\gamma$ for all $\gamma\in W$.  Let $V$ be a neighbourhood of $b$ such that $\Psi (V)\subset U$, and for each $\gamma\in V$, take $h_\gamma\in \Aut_{\mathfrak{g}}(X)$ such that $(h_\gamma)_\#=\Psi(\gamma)$. 
\end{proof}

We need to be able to work with a finite number of points simultaneously.

Let $Y_{X,m}$ denote the configuration space of $m$-tuples of distinct points in $X$.
 If $v$ is a vector field on $X$, then $\oplus v$ denotes the vector field on $Y_{X,m}$ with $\oplus v(x_1,\ldots,x_m)=(v(x_1),\ldots,v(x_m))$, where we have identified $T_{(x_1,\ldots,x_m)}Y_{X,m}$ with $T_{x_1}X \oplus\cdots\oplus T_{x_m}X$.  Note that $\oplus\mathfrak{g}$ is a Lie algebra with the density property on $Y_{X,m}$.

The same argument as above gives the following result.

\begin{proposition}  \label{p:several-points}
Let $x_1, \ldots, x_m$ be distinct points in $X$.  Let $U$ be a neighbourhood of $\id_X$ in $\Aut_\omega(X)$.  Denote the point $(j^k_{x_1}(\id_X), \dots,j^k_{x_m}(\id_X))$ in $Y_{J^k(X),m}$ by $b$.  There is a neighbourhood $V$ of $b$ in $\mathscr{R}_{\oplus p_k({\mathfrak{g}})}(b)$ such that for all $\gamma=(\gamma_1,\dots, \gamma_m) \in V$, there is $h_\gamma\in U$ with $j^k_{x_j}(h_\gamma)=\gamma_j$ for $j=1,\ldots, m$.
\end{proposition}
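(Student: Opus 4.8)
The plan is to run the proof of the preceding proposition essentially verbatim, with the manifold $J^k(X)$ replaced by the configuration space $Y_{J^k(X),m}$, the Lie algebra $p_k(\mathfrak g)$ replaced by $\oplus p_k(\mathfrak g)$, and the single jet $j^k_{x_0}(\id_X)$ replaced by $b=(j^k_{x_1}(\id_X),\dots,j^k_{x_m}(\id_X))$. The one structural fact that makes the several-point version work is the analogue of the identity $\Aut_{\mathfrak g}(X)_\#=\Aut_{p_k(\mathfrak g)}(J^k(X))$ used above. Namely, if $v\in\mathfrak g$ is complete, then $\oplus p_k(v)$ is complete on $Y_{J^k(X),m}$ (its flow moves base points by the bijection $\varphi_v^t$, hence keeps them distinct), and its flow is the diagonal jet-action $(\xi_1,\dots,\xi_m)\mapsto((\varphi_v^t)_\#\xi_1,\dots,(\varphi_v^t)_\#\xi_m)$. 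Consequently every element of $\Aut_{\oplus p_k(\mathfrak g)}(Y_{J^k(X),m})$ has the form $(\xi_1,\dots,\xi_m)\mapsto(h_\#\xi_1,\dots,h_\#\xi_m)$ for a single $h\in\Aut_{\mathfrak g}(X)$, which is exactly what is needed to produce one automorphism realising all $m$ prescribed jets at once.

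First I would apply the Hermann-Nagano theorem to $\oplus p_k(\mathfrak g)$ at $b$ to obtain finitely many fields $\oplus p_k(v_1),\dots,\oplus p_k(v_M)$ whose values span $T_b\mathscr R_{\oplus p_k(\mathfrak g)}(b)$. Since $\mathfrak g$ has the density property and $\oplus p_k$ is linear and continuous, I may approximate each $v_i$ by a finite sum of complete fields in $\mathfrak g$ without destroying the spanning property (which is open), and then enlarge the spanning set to its individual complete summands. Thus I may assume $v_1,\dots,v_M\in\mathfrak g$ are complete, so that $\oplus p_k(v_1),\dots,\oplus p_k(v_M)$ are complete on $Y_{J^k(X),m}$ by the remark above. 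Note that each $\varphi_{v_i}^t$ preserves $\omega$, since $v_i$ has $\omega$-divergence zero.

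Next I would form the continuous map $\Phi:\C^M\to\Aut_{\oplus p_k(\mathfrak g)}(Y_{J^k(X),m})$, $\Phi(t)=\varphi^{t_M}_{\oplus p_k(v_M)}\circ\cdots\circ\varphi^{t_1}_{\oplus p_k(v_1)}$, and observe that the map $t\mapsto\Phi(t)(b)$ into the orbit is a submersion at $0$, so it admits a holomorphic section $\sigma$ on a neighbourhood of $b$ with $\sigma(b)=0$. Setting $\Psi=\Phi\circ\sigma$, we have $\Psi(b)=\id$ and $\Psi(\gamma)(b)=\gamma$ for $\gamma$ near $b$. By the identification in the first paragraph, $\Psi(\gamma)$ is the diagonal jet-action of the single automorphism $h_\gamma=\varphi^{\sigma(\gamma)_M}_{v_M}\circ\cdots\circ\varphi^{\sigma(\gamma)_1}_{v_1}\in\Aut_\omega(X)$, whence $j^k_{x_j}(h_\gamma)=\gamma_j$ for $j=1,\dots,m$. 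Finally, the map $\gamma\mapsto h_\gamma$ is continuous into $\Aut_\omega(X)$ and sends $b$ to $\id_X\in U$ because $\sigma(b)=0$, so there is a neighbourhood $V$ of $b$ with $h_\gamma\in U$ for all $\gamma\in V$, as required.

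The only point that genuinely differs from the single-point case, and hence the main thing to verify carefully, is the structural identity of the first paragraph: that $\Aut_{\oplus p_k(\mathfrak g)}(Y_{J^k(X),m})$ consists precisely of the diagonal jet-actions of automorphisms in $\Aut_{\mathfrak g}(X)$. This is what guarantees that the prescribed jets at all $m$ points are matched by one map $h_\gamma$ rather than by $m$ unrelated maps. Everything else is a direct transcription of the single-point argument, with the density property of $\mathfrak g$ entering only to arrange that the spanning fields may be taken complete.
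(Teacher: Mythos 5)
Your proposal is correct and is essentially the paper's own argument: the paper proves the several-point version by simply rerunning the single-point proof on the configuration space $Y_{J^k(X),m}$ with $\oplus p_k(\mathfrak g)$, which is exactly what you do. The one point you flag as needing care (that the constructed automorphism of the configuration space is the diagonal jet-action of a single $h_\gamma\in\Aut_{\mathfrak g}(X)\subset\Aut_\omega(X)$) is indeed the crux, and your verification of it is sound.
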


Note that if $v$ is a holomorphic vector field with a zero at $p\in X$, then $d_p v$ is a well-defined $\C$-linear endomorphism of $T_{p}X$. 

\begin{proposition}   \label{p:orbit}
Let $x_1,\ldots, x_m$ be distinct points in $X$.  For $j=1,\ldots,m$, let $A_j$ be a $\C$-linear endomorphism of $T_{x_j}X$ with zero trace.  There exists a divergence-free holomorphic vector field $v$ on $X$ such that $v(x_j)=0$ and $d_{x_j}v=A_j$ for $j=1,\ldots,m$.
\end{proposition}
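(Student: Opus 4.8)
The plan is to dualise the problem through the volume form, turning the divergence-free condition into closedness, and then to manufacture the prescribed jet by an \emph{exact} form so that global closedness comes for free. Contraction with $\omega$ is a holomorphic bundle isomorphism $v\mapsto v\rfloor\omega$ from the tangent bundle onto $\Lambda^{n-1}T^*X$, and Cartan's formula together with $d\omega=0$ gives $d(v\rfloor\omega)=(\operatorname{div}_\omega v)\,\omega$. Hence $v$ is divergence-free precisely when $\eta:=v\rfloor\omega$ is a closed holomorphic $(n-1)$-form. A bundle isomorphism induces an isomorphism on $1$-jets at each point, so prescribing $v(x_j)=0$ and $d_{x_j}v=A_j$ is equivalent to prescribing the $1$-jet of $\eta$ at $x_j$. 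The proposition therefore reduces to producing a single global closed holomorphic $(n-1)$-form $\eta$ with a specified $1$-jet at each of the $x_j$.

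To see which $1$-jets are admissible, I would work in local coordinates near $x_j$ in which $\omega=dz_1\wedge\cdots\wedge dz_n$; such coordinates exist because $\omega$ is nowhere zero. Writing $v=\sum_i v_i\,\partial_i$ with $v(x_j)=0$ and $\partial_k v_i(x_j)=(A_j)_{ik}$, a one-line computation gives $d\eta=(\operatorname{div}v)\,\omega$, so $d\eta(x_j)=(\operatorname{tr}A_j)\,\omega(x_j)$. Thus the hypothesis $\operatorname{tr}A_j=0$ says exactly that the prescribed $1$-jet of $\eta$ is \emph{formally closed}. The construction then goes through an $(n-2)$-form: the target $1$-jet is a polynomial $(n-1)$-form of degree $\le 1$, and I seek a polynomial $(n-2)$-form $P_j$ of degree $\le 2$ with $dP_j$ equal to it. By the polynomial Poincar\'e lemma (valid here since $n-1\ge 1$), such a $P_j$ exists precisely when the target jet is formally closed, i.e.\ precisely when $\operatorname{tr}A_j=0$. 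Since $X$ is Stein and $\Omega^{n-2}_X$ is coherent, Cartan's jet interpolation theorem yields a global holomorphic $(n-2)$-form $\beta$ with $j^2_{x_j}(\beta)=P_j$ for all $j$ simultaneously. Setting $\eta=d\beta$ produces a globally defined closed holomorphic $(n-1)$-form whose $1$-jet at each $x_j$ agrees with that of $dP_j$ (because $j^1(d\beta)$ depends only on $j^2(\beta)$), hence is the prescribed one. Finally the field $v$ determined by $v\rfloor\omega=\eta$ is the required divergence-free vector field.

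The one genuinely delicate point is the bookkeeping identifying the trace-zero hypothesis with the solvability condition of the formal Poincar\'e lemma, i.e.\ verifying that $\operatorname{tr}A_j=0$ is both necessary (as forced by $d\eta(x_j)=0$) and sufficient (as the obstruction to the polynomial primitive $P_j$) at the correct jet orders; everything else is routine Oka--Cartan interpolation and a coordinate normalisation of $\omega$. It is worth noting that this argument uses only that $X$ is an $n$-dimensional Stein manifold with a holomorphic volume form and $n\ge 2$; the volume density property is not needed for this proposition.
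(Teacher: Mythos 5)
Your proof is correct and takes essentially the same approach as the paper: dualise via $\omega$ to closed holomorphic $(n-1)$-forms, use the trace-zero hypothesis to produce a local primitive $(n-2)$-form, interpolate its $2$-jets globally by Oka--Cartan theory on the Stein manifold, and take the exterior derivative. The only cosmetic difference is that the paper realises the local model as an actual linear divergence-free field on a union of balls and applies the holomorphic Poincar\'e lemma there, where you work directly with polynomial jets.
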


\begin{proof}
Let $\varphi_j$ be a holomorphic coordinate on $X$ near $x_j$ with $\varphi_j(x_j)=0$ and $\omega=\varphi_j^*(dz_1\wedge \cdots \wedge dz_n)$.  Let 
\[ B_j = d_{x_j}\varphi_j\circ A_j\circ (d_{x_j}\varphi_j)^{-1}\in \mathfrak{sl}_n(\C). \]
Take mutually disjoint open balls centred at $x_1,\ldots, x_m$.  Define a holomorphic vector field $\chi$ on their union $\Omega$ to be constantly equal to $\varphi_j^*B_j$ near $x_j$.  Then $\chi$ is divergence-free and $d_{x_j}\chi=A_j$.

Consider the closed holomorphic $(n-1)$-form $\alpha=\chi\rfloor \omega$ on $\Omega$.  Since $H^{n-1}(\Omega)=0$, there is a holomorphic $(n-2)$-form $\beta$ on $\Omega$ such that $d\beta=\alpha$.  Find a holomorphic $(n-2)$-form $\tilde \beta$ on $\C^n$ such that $j^2_{x_j}\tilde\beta=j^2_{x_j}\beta$ for $j=1,\ldots,m$.

Let $\tilde \alpha=d\tilde\beta$ and define a divergence-free holomorphic vector field $v$ on $X$ by $\tilde\alpha=v\rfloor \omega$.  Then $j^1_{x_j}\tilde \alpha=j^1_{x_j}\alpha$, so $j^1_{x_j}v=j^1_{x_j} \chi$ for $j=1,\ldots,m$.
\end{proof}

Here, finally, is the main result of this section.

\begin{theorem}   \label{t:perturbations}
 let $X$ be a Stein manifold of dimension $n\geq 2$, satisfying the volume density property with respect to a holomorphic volume form $\omega$.  Let $x_1,\dots, x_m$ be distinct points in $X$.  Let $u\in T_{x_1}X$, $u\neq 0$.  Every neighbourhood of $\id_X$ in $\Aut_\omega(X)$ contains an automorphism $h$ such that:
\begin{enumerate}
\item $h(x_j)=x_j$ for $j=1,\ldots,m$,
\item $d_{x_j}h=\id$ for $j=2,\ldots,m$,
\item $d_{x_1}h(u)=\alpha u$, with $\alpha>1$.
\end{enumerate}
\end{theorem}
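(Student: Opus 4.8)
The plan is to realise a suitable $1$-jet at each $x_j$ by means of Proposition \ref{p:several-points}, after manufacturing that jet as a short-time flow of a single divergence-free field supplied by Proposition \ref{p:orbit}. Throughout I take $k=1$, so that a jet records exactly a base point and a derivative, and I abbreviate $b=(j^1_{x_1}(\id_X),\dots,j^1_{x_m}(\id_X))$.

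Begin by fixing the target at $x_1$. Since $h$ is to preserve $\omega$ and to fix $x_1$, the derivative $d_{x_1}h$ must lie in $\mathrm{SL}(T_{x_1}X)$, so the eigenvalue $\alpha>1$ along $u$ has to be offset by contraction in complementary directions. I therefore choose $A\in\mathfrak{sl}(T_{x_1}X)$ with $u$ as an eigenvector of a positive real eigenvalue $\lambda$, the remaining eigenvalues summing to $-\lambda$; this is possible because $n\geq 2$. Then $\exp(tA)u=e^{t\lambda}u$, and setting $\alpha=e^{t\lambda}$ gives $\alpha>1$ for each $t>0$, with $\alpha\to 1$ as $t\to 0$.

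Now apply Proposition \ref{p:orbit} with $A_1=A$ and $A_2=\dots=A_m=0$, all of zero trace, to obtain a divergence-free holomorphic field $v$ with $v(x_j)=0$ for every $j$, $d_{x_1}v=A$, and $d_{x_j}v=0$ for $j\geq 2$. The field $v$ need not be complete, but its local flow $\varphi_v^t$ exists near each $x_j$ for small $t$, fixes $x_j$, and satisfies $d_{x_j}\varphi_v^t=\exp(t\,d_{x_j}v)$. Hence for small $t$ the tuple $\gamma(t)=(j^1_{x_1}(\varphi_v^t),\dots,j^1_{x_m}(\varphi_v^t))=\varphi^t_{\oplus p_1(v)}(b)$ is defined, and as the integral curve through $b$ of the field $\oplus p_1(v)\in\oplus p_1(\mathfrak g)$ it remains in the orbit $\mathscr{R}_{\oplus p_1(\mathfrak g)}(b)$, which is invariant under the flows of its generating fields (note $tv\in\mathfrak g$, so $\gamma(t)=\varphi^1_{\oplus p_1(tv)}(b)$ is of the form defining the orbit).

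It remains to assemble. Given a neighbourhood $U$ of $\id_X$, Proposition \ref{p:several-points} produces a neighbourhood $V$ of $b$ in $\mathscr{R}_{\oplus p_1(\mathfrak g)}(b)$ whose points are the $1$-jet tuples of automorphisms in $U$. Choosing $t>0$ small enough that $\gamma(t)\in V$, I obtain $h\in U$ with $j^1_{x_j}(h)=\gamma_j(t)$; then $h(x_j)=x_j$ for all $j$, $d_{x_j}h=\exp(0)=\id$ for $j\geq 2$, and $d_{x_1}h=\exp(tA)$, so that $d_{x_1}h(u)=\alpha u$ with $\alpha=e^{t\lambda}>1$. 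The one genuine constraint, the trace-zero condition forced by volume preservation, is exactly what Proposition \ref{p:orbit} is built to accommodate; granting the two propositions, the remaining work is bookkeeping.
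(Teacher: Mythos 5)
Your proposal is correct and follows essentially the same route as the paper: apply Proposition \ref{p:orbit} with a trace-zero endomorphism at $x_1$ having $u$ as an eigenvector of positive eigenvalue (the paper takes the concrete choice $\mathrm{diag}(1,-1,0,\dots,0)$ after sending $u$ to $(1,0,\dots,0)$) and zero at the other points, then feed the curve of $1$-jets $t\mapsto(j^1_{x_j}(\varphi_v^t))_j$ in the orbit $\mathscr{R}_{\oplus p_1(\mathfrak g)}(b)$ into Proposition \ref{p:several-points} for small $t>0$. Your observation that $\gamma(t)=\varphi^1_{\oplus p_1(tv)}(b)$ lies in the orbit because $tv\in\mathfrak g$ is a slightly more careful justification than the paper offers, but the argument is the same.
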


\begin{proof}
Choose an isomorphism of $T_{x_1}X$ with $\C^n$ taking $u$ to $(1,0,\ldots,0)$.  Let $A_1\in \mathfrak{sl}_n(\C)$ be the diagonal matrix with diagonal entries $1,-1,0,\ldots, 0$ and let $A_j=0$ for all $j=2,\ldots,m$.  Let $v$ be the vector field given by Proposition \ref{p:orbit} and let $\varphi^t_v$ be its flow.   We have a continuous map
\[ \gamma:[0,\infty)\to \mathcal{R}_{\oplus p_1({\mathfrak{g}})}(j^1_{x_1}(\id_X), \dots,j^1_{x_m}(\id_X)), \quad \gamma(t) = (j^1_{x_1}(\varphi^t_v), \ldots, j^1_{x_m}(\varphi^t_v)), \]
such that $\gamma(0)=(j^1_{x_1}(\id_X), \ldots, j^1_{x_m}(\id_X))$ and $\gamma(t)$ satisfies (1), (2), (3) for all $t>0$.  Proposition \ref{p:several-points} completes the proof.
\end{proof}

\section{Genericity of expelling automorphisms}
\label{sec:expelling}

\noindent
This section contains the proof of Theorem \ref{t:expelling}.  We let $X$ be a Stein manifold satisfying the volume density property with respect to an exact holomorphic volume form $\omega$.  We will show that a generic volume-preserving holomorphic automorphism of $X$ is expelling.  This is obvious for the sole $1$-dimensional example of $X=\C$ with $\omega=dz$, so we will assume that $n=\dim X\geq 2$.

For $f\in\Aut_\omega(X)$, let $K_f$ be the set of $x\in X$ such that the forward orbit $\{f^j(x):j\geq 0\}$ is relatively compact in $X$.  Note that $K_f$ is $F_\sigma$.  Indeed, if $(\Lambda_k)_{k\geq 0}$ is an exhaustion of $X$ by compact subsets, then for each $k\geq 0$, the set $K_f^k$ of $x\in X$ such that $f^j(x)\in \Lambda_k$ for all $j\geq 0$ is a compact subset of $X$, and clearly, $K_f=\bigcup\limits_{k\geq 0}K_f^k$.  Our goal, therefore, is to show that for a generic $f\in\Aut_\omega(X)$, the $G_\delta$ set $X\setminus K_f$ is dense in $X$.

\begin{claim}   \label{c:Baire-arguments}
It suffices to show that 
\[ K= \bigcup\limits_{f\in\Aut_\omega(X)} K_f\times\{f\} \]
has empty interior in $X\times \Aut_\omega(X)$.
\end{claim}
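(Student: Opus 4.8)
The plan is to combine the $F_\sigma$ structure of $K$ with a Kuratowski--Ulam argument, the Baire-category analogue of Fubini's theorem. Both factors of the product are suitable: $X$ is a second-countable Stein manifold, and $\Aut_\omega(X)$, being a closed subgroup of the completely metrizable group $\Aut(X)$, is a Baire space. First I would record that $K$ is $F_\sigma$. Writing $K=\bigcup_{k\ge0}K^{(k)}$ with $K^{(k)}=\{(x,f): f^j(x)\in\Lambda_k \text{ for all }j\ge0\}$, I note that for each fixed $j$ the map $(x,f)\mapsto f^j(x)$ is continuous, since composition in $\Aut(X)$ and evaluation $\Aut(X)\times X\to X$ are both continuous in the compact-open topology (here $X$ is locally compact). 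Hence $\{(x,f):f^j(x)\in\Lambda_k\}$ is closed, and so $K^{(k)}=\bigcap_{j\ge0}\{(x,f):f^j(x)\in\Lambda_k\}$ is closed. Observe that the fibre of $K^{(k)}$ over $f$ is precisely the compact set $K_f^k$.

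Assuming $K$ has empty interior, each $K^{(k)}$ has empty interior as well, being a subset of $K$; and since it is closed, it is nowhere dense in $X\times\Aut_\omega(X)$. This is the crucial upgrade: empty interior of the whole $F_\sigma$ set $K$ yields nowhere-denseness of each closed slab $K^{(k)}$, which is exactly what lets the category version of Fubini deliver nowhere-dense (not merely meagre) fibres.

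Now I would slice over $\Aut_\omega(X)$, regarding the product as $\Aut_\omega(X)\times X$ with second-countable second factor $X$. By the Kuratowski--Ulam theorem, for each $k$ the set $N_k=\{f\in\Aut_\omega(X): K_f^k \text{ is nowhere dense in }X\}$ is comeagre in $\Aut_\omega(X)$. Since $\Aut_\omega(X)$ is a Baire space, $N=\bigcap_{k\ge0}N_k$ is again comeagre and hence contains a dense $G_\delta$ subset. For $f\in N$, every $K_f^k$ is closed and nowhere dense, so each $X\setminus K_f^k$ is open and dense, and therefore $X\setminus K_f=\bigcap_{k\ge0}(X\setminus K_f^k)$ is a dense $G_\delta$ subset of $X$ by the Baire category theorem in $X$. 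Thus $X\setminus K_f$ is dense for every $f$ in a dense $G_\delta$ subset of $\Aut_\omega(X)$, which is exactly the goal stated before the claim.

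I expect the only delicate point to be the correct invocation of Kuratowski--Ulam: one must slice the individual closed sets $K^{(k)}$ rather than $K$ itself, because the meagre version applied to $K$ would give only meagre fibres, whereas density of $X\setminus K_f$ requires the stronger conclusion that each $K_f^k$ is nowhere dense. The continuity of the iterated evaluation maps and the Baire property of both $X$ and $\Aut_\omega(X)$ are the routine ingredients that make the slicing legitimate.
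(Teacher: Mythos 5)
Your proof is correct and follows essentially the same route as the paper: the paper's countable-basis argument with the closed, empty-interior sets $E_B^k=\{f\in\Aut_\omega(X): f^j(\overline B)\subset\Lambda_k \text{ for all } j\geq 0\}$ is exactly the closed-set case of the Kuratowski--Ulam slicing that you invoke by name, applied to the same decomposition of $K$ into the closed slabs $K_k$. The ingredients are identical --- second countability of $X$, closedness of the slabs, and the Baire property of $\Aut_\omega(X)$ and of $X$ --- so the only difference is that the paper inlines the category Fubini rather than citing it.
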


\begin{proof}
Assume that $K$ has empty interior.  Let $B\Subset X$ be open, let $k\geq 0$, and consider the closed  subset of $\Aut_\omega(X)$ with empty interior defined by
\[ E_B^k = \{f\in \Aut_\omega(X): f^j(\overline B)\subset \Lambda_k \textrm{ for all } j\geq 0\}. \]
Let $\mathscr B$ be a countable basis for the topology on $X$, consisting of relatively compact open subsets, and consider the union 
\[ F=\bigcup_{B\in\mathscr B, k\geq 0}E_B^k. \]
Since $\Aut_\omega(X)$ is a Baire space, $F$ is an $F_\sigma$ subset of ${\rm Aut}_\omega(X)$ with empty interior.  Its complement is a dense $G_\delta$, and for every $f$ in the complement, by Baire, the set $K_f$ has empty interior.
\end{proof}

We assume that the interior of $K$ is not empty.  We will eventually arrive at a contradiction.  If the closed set 
\[ K_k = \{ (x,f)\in X\times \Aut_\omega(X) : f^j(x)\in \Lambda_k \textrm{ for all } j\geq 0\} \]
had empty interior for all $k\geq 0$, then $K=\bigcup\limits_{k\geq 0}K_k$ would have empty interior too, since $X\times \Aut_\omega(X)$ is a Baire space.  Hence $U_k=K_k^\circ\neq\varnothing$ for some $k\geq 0$.  Choose $(x_0, f_0)\in U_k$ and let
\[ A_{f_0} = \{x\in X : (x, f_0)\in U_k\}. \] 
Clearly, $A_{f_0}$ is open, relatively compact, and forward invariant by $f_0$, that is, $f_0(A_{f_0}) \subset A_{f_0}$.

\begin{claim}  \label{c:completely-invariant}
$A_{f_0}$ is completely invariant by $f_0$, that is, $f_0(A_{f_0}) = A_{f_0}$.
\end{claim}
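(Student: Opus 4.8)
The plan is to establish the nontrivial inclusion $A_{f_0}\subseteq f_0(A_{f_0})$, equivalently $f_0^{-1}(A_{f_0})\subseteq A_{f_0}$, since forward invariance $f_0(A_{f_0})\subseteq A_{f_0}$ is already in hand. Write $A=A_{f_0}$ and let $\mu=\omega\wedge\bar\omega$ be the real volume form, which $f_0$ preserves because $f_0^*\omega=\omega$. The first ingredient is measure-theoretic: since $A$ is relatively compact, $\mu(A)<\infty$, and because $f_0$ is a $\mu$-preserving automorphism with $f_0(A)\subseteq A$, we have $A=f_0^{-1}(f_0(A))\subseteq f_0^{-1}(A)$ and $\mu(f_0^{-1}(A))=\mu(A)$, so $\mu(f_0^{-1}(A)\setminus A)=0$. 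This says $f_0^{-1}(A)$ exceeds $A$ only by a null set, but not yet that they coincide. The \emph{main obstacle} is exactly this passage from a null-set conclusion to genuine equality: a general volume-preserving homeomorphism can carry a relatively compact open set strictly into itself, the difference being a null boundary set, so volume preservation alone yields only that $f_0(A)$ is dense in $A$. To make progress I would first extract a topological gain from the null statement: the open set $f_0^{-1}(A)\cap(X\setminus\overline A)$ is contained in the null set $f_0^{-1}(A)\setminus A$, hence is empty since $\mu$ is positive on nonempty open sets; therefore $f_0^{-1}(A)\subseteq\overline A$, and being open, $f_0^{-1}(A)\subseteq\operatorname{int}\overline A$.

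The second ingredient exploits the fact that $A$ is the slice at $f_0$ of the open set $U_k\subseteq X\times\Aut_\omega(X)$, not merely an abstract invariant set. Set $E=f_0^{-1}(A)\setminus A$; I claim $E\cap\operatorname{int}\Lambda_k=\varnothing$. Suppose $x\in E$, so $x\notin A$ but $f_0(x)\in A$, that is, $(f_0(x),f_0)\in U_k$. The map $\Theta(z,g)=(g(z),g)$ is a continuous self-map of $X\times\Aut_\omega(X)$ with $\Theta(x,f_0)=(f_0(x),f_0)$, so $\Theta^{-1}(U_k)$ is an open neighbourhood of $(x,f_0)$; for every $(z,g)$ in it we get $(g(z),g)\in U_k\subseteq K_k$, whence $g^{j}(z)=g^{j-1}(g(z))\in\Lambda_k$ for all $j\geq1$. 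If in addition $x\in\operatorname{int}\Lambda_k$, then intersecting with $\operatorname{int}\Lambda_k\times\Aut_\omega(X)$ also forces $g^{0}(z)=z\in\Lambda_k$, so the whole neighbourhood lies in $K_k$ and $(x,f_0)\in K_k^\circ=U_k$, i.e.\ $x\in A$ --- contradicting $x\notin A$. Hence $E\cap\operatorname{int}\Lambda_k=\varnothing$.

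Finally I would combine the two ingredients. From the first step $E\subseteq f_0^{-1}(A)\subseteq\operatorname{int}\overline A$. Since every point of $A$ is its own zeroth iterate, $A\subseteq\Lambda_k$, and as $\Lambda_k$ is compact we get $\overline A\subseteq\Lambda_k$; thus the open set $\operatorname{int}\overline A$ is an open subset of $\Lambda_k$ and so $\operatorname{int}\overline A\subseteq\operatorname{int}\Lambda_k$, giving $E\subseteq\operatorname{int}\Lambda_k$. Together with $E\cap\operatorname{int}\Lambda_k=\varnothing$ from the second step this forces $E=\varnothing$, that is, $f_0^{-1}(A)\subseteq A$. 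Combined with forward invariance we obtain $f_0(A)=A$. The essential point is that volume preservation by itself only removes the interior of the discrepancy (yielding density), and it is the interior/product structure of $U_k$, acting through the compact exhaustion piece $\Lambda_k$, that eliminates the remaining boundary set and upgrades density to complete invariance.
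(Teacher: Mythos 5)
Your proof is correct, and it takes a genuinely different route from the paper's. The paper argues component by component: volume preservation forces each connected component of $A_{f_0}$ to be forward invariant under some iterate of $f_0$, and a normal-families argument (Montel, Hurwitz, Vitali, together with the Dixon--Esterle theorem on limits of images of iterates) produces a subsequence $f_0^{m_k}\to\id$ on the component, which yields surjectivity. You replace all of this complex analysis by two elementary observations: volume preservation confines the possible discrepancy $E=f_0^{-1}(A_{f_0})\setminus A_{f_0}$ to $\operatorname{int}\overline{A_{f_0}}\subseteq\operatorname{int}\Lambda_k$, while the realisation of $A_{f_0}$ as the $f_0$-slice of the \emph{interior} $U_k=K_k^{\circ}$, combined with continuity of $(z,g)\mapsto(g(z),g)$, shows that $E$ cannot meet $\operatorname{int}\Lambda_k$; your device of intersecting $\Theta^{-1}(U_k)$ with $\operatorname{int}\Lambda_k\times\Aut_\omega(X)$ correctly supplies the one membership condition, $z\in\Lambda_k$, that $\Theta^{-1}(U_k)$ alone does not give. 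What your approach buys is elementarity and the avoidance of the component-periodicity step (which itself needs a small volume-counting argument when there are infinitely many components). What it costs is portability: it is tied to the specific presentation of $A_{f_0}$ as a slice of $K_k^{\circ}$ and to the exhaustion $(\Lambda_k)$, whereas the paper's normal-families argument applies to any relatively compact forward-invariant open set and, more importantly, is reused almost verbatim in the very next claim to prove that the closure $G$ of the group generated by $f_0$ in $\Aut(\Omega)$ is compact; so even with your proof in hand, that machinery cannot be dispensed with in the section as a whole.
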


\begin{proof}
Since $f_0$ is volume-preserving, every connected component $\Omega_0$ of $A_{f_0}$ is forward invariant by some iterate $f_0^j$.  Assume that $j=1$ (otherwise consider $f_0^j$ instead of $f_0$).  We will show that $f_0(\Omega_0)=\Omega_0$.  There is a subsequence $(f_0^{n_k})$ converging uniformly on compact subsets of $\Omega_0$ to a holomorphic map $g:\Omega_0\to X$.  Since $f_0$ is volume-preserving, $g$ is injective by Hurwitz' theorem.  Let $x\in \Omega_0$ and let $V\Subset g(\Omega_0)$ be a neighbourhood of $g(x)$.  By \cite[Theorem 5.2]{DE1986}, $V$ is eventually contained in $f_0^{n_k}(\Omega_0)$, so $V\subset \Omega_0$.  Moreover, $(f_0^{-n_k})$ converges to $g^{-1}\vert_V$.  By passing to a subsequence, we may assume that $m_k=n_{k+1}-n_k$ strictly increases to $\infty$, so, on $V$, $f_0^{m_k}$ converges to $\id_V$.  By Vitali's theorem, this implies that $f_0^{m_k} $ converges to $\id_\Omega$.  Again by \cite[Theorem 5.2]{DE1986}, any compact set in $\Omega_0$ is eventually contained in $f_0^{m_k}(\Omega_0)$, that is, $f_0(\Omega_0)=\Omega_0$.
\end{proof}

\begin{claim}  \label{c:noperiodic}
The point $x_0$ is not periodic for $f_0$.
\end{claim}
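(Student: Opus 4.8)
The plan is to argue by contradiction: suppose $x_0$ is periodic of minimal period $p\geq 1$, so that the orbit points $x_0,\ldots,x_{p-1}$, with $x_i=f_0^i(x_0)$, are distinct. The idea is that the volume density property lets me perturb $f_0$ as little as I please while keeping this periodic orbit intact and turning its return map into one that expands in a single direction; the recurrence already extracted in the proof of Claim~\ref{c:completely-invariant} then forbids such expansion, and that is the contradiction.

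First I would set up the perturbation. Let $M=d_{x_0}f_0^p$ be the derivative of the return map, an endomorphism of $T_{x_0}X$. Since $f_0^p$ fixes $x_0$ and preserves $\omega$, we have $\det M=1$, so the product of the moduli of the eigenvalues of $M$ equals $1$ and $M$ has an eigenvalue $\lambda$ with $|\lambda|\geq 1$; let $u\neq 0$ be a corresponding eigenvector. Applying Theorem~\ref{t:perturbations} to the $p$ distinct orbit points, with $x_0$ in the role of $x_1$ and with this $u$, I obtain inside any prescribed neighbourhood of $\id_X$ in $\Aut_\omega(X)$ a volume-preserving $h$ fixing every orbit point, with $d_{x_i}h=\id$ at the orbit points other than $x_0$ and $d_{x_0}h(u)=\alpha u$ for some $\alpha>1$. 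Put $\tilde f=h\circ f_0$. Then $\tilde f$ is volume-preserving, as close to $f_0$ as desired, and still has $x_0$ as a periodic point of period $p$. A chain-rule computation around the orbit, using $d_{x_i}h=\id$ at the intermediate points, gives $d_{x_0}\tilde f^p=d_{x_0}h\circ M$; since $u$ is an eigenvector of $M$, it is an eigenvector of $N:=d_{x_0}\tilde f^p$ as well, with eigenvalue $\alpha\lambda$ of modulus $\alpha|\lambda|\geq\alpha>1$.

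Next I would derive the contradiction. Because $h$ is close to $\id$ and $U_k=K_k^\circ$ is open with $(x_0,f_0)\in U_k$, we have $(x_0,\tilde f)\in U_k$, so the slice $A_{\tilde f}=\{x:(x,\tilde f)\in U_k\}$ is again a relatively compact, forward-invariant open set containing $x_0$, for the same reasons as $A_{f_0}$. The argument proving Claim~\ref{c:completely-invariant}, applied verbatim to $\tilde f$, yields a sequence $M_k\to\infty$ with $\tilde f^{M_k}\to\id$ uniformly on compact subsets of the component $\tilde\Omega_0$ of $A_{\tilde f}$ containing $x_0$. Since the $\tilde f$-orbit of $x_0$ is the finite set of distinct points $\{x_0,\ldots,x_{p-1}\}$ and $\tilde f^{M_k}(x_0)\to x_0$, I may pass to a subsequence with $p\mid M_k$, so that $d_{x_0}\tilde f^{M_k}=N^{M_k/p}$. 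Uniform convergence near $x_0$ forces $d_{x_0}\tilde f^{M_k}\to\id$, hence $N^{M_k/p}\to\id$; but $N^{M_k/p}u=(\alpha\lambda)^{M_k/p}u$ has norm tending to infinity, a contradiction. Therefore $x_0$ is not periodic.

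The main obstacle, and the one point requiring care, is producing the expanding eigenvalue from a perturbation that is simultaneously small and volume-preserving. This is exactly where $\det M=1$ enters, locating an eigenvalue of modulus at least $1$, together with the observation that taking $u$ to be its eigenvector makes $u$ an eigenvector of the perturbed return map too, so that Theorem~\ref{t:perturbations} in its stated form already suffices and no control of the full Jacobian $d_{x_0}h$ is needed. Everything else is a transcription of the recurrence established for $f_0$ in Claim~\ref{c:completely-invariant} to the nearby map $\tilde f$, which is the engine forbidding expansion at a point with relatively compact orbit.
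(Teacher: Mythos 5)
Your proposal is correct and follows essentially the same route as the paper: use Theorem \ref{t:perturbations} to install an eigenvalue of modulus greater than $1$ in the return map at the periodic orbit while keeping $(x_0,\tilde f)$ inside the open set $U_k$, and then contradict the relative compactness of forward orbits. The only (harmless) differences are that you treat the already-expanding case and the unimodular case uniformly by always perturbing, whereas the paper perturbs only when all eigenvalues have modulus $1$, and that you derive the final contradiction from the recurrence $\tilde f^{M_k}\to\id$ extracted in the proof of Claim \ref{c:completely-invariant} rather than from Cauchy estimates on a family of holomorphic discs with relatively compact images --- both are valid.
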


\begin{proof}
Suppose that $x_0$ is periodic for $f_0$ with period $m$.  Assume that the differential $d_{x_0} f_0^m$ admits an eigenvalue with absolute value $\lambda$ strictly bigger than $1$.  Then for each $j\geq 1$, the map $f_0^{mj}: A_{f_0}\to A_{f_0}$ admits an eigenvalue with absolute value $\lambda^j$.  Let $\gamma$ be a holomorphic disc in  $A_{f_0}$ tangent to an associated eigenvector. Then the family of holomorphic discs $f_0^{mj}\circ \gamma : \mathbb D \to A_{f_0} \Subset X$, $j\geq 1$, contradicts Cauchy estimates at $x_0$.
 
Since $f_0$ preserves the holomorphic volume form $\omega$, its holomorphic Jacobian determinant is $1$.  Hence, if $d_{x_0} f_0^m$ has no eigenvalue with absolute value strictly bigger than $1$, then all the eigenvalues of $d_{x_0} f_0^m$ have absolute value $1$.  Assume this.  Let $v$ be an eigenvector for $d_{x_0} f_0^m$.  Let $x_j=f_0^j(x_0)$ for $j\geq 0$.  By Theorem \ref{t:perturbations}, there is $h\in \Aut_\omega(X)$ such that 
\begin{enumerate}
\item $h$ is arbitrarily close to $\id_X$,
\item $h(x_j)=x_j$ for $j=0,\ldots,m-1$.
\item $d_{x_0}h(v)=\alpha v$, with $\alpha>1$,
\item $d_{x_j}h=\id$ for $j=1,\ldots,m-1$.
\end{enumerate}
Let $f_1=h\circ f_0$.  Then $x_0$ is a periodic point of period $m$ for $f_1$ and $v$ is an eigenvector of $d_{x_0}f_1^m$ whose eigenvalue has absolute value strictly greater than $1$.  If $h$ is close enough to $\id_X$, then $f_1$ is close enough to $f_0$ that the point $(x_0, f_1)$ belongs to $U_k$.  We obtain a contradiction as before.
\end{proof}

Every connected component of $A_{f_0}$ is periodic.  Let $\Omega_0$ be a connected component and let $\Omega$ be the union of all the connected components in the $f_0$-cycle of $\Omega_0$.  Then $f_0\in\Aut(\Omega)$.  Since $\Omega$ is a relatively compact open set in a Stein manifold, it is Kobayashi-hyperbolic, so $\Aut (\Omega)$ is a real Lie group.
 
\begin{claim}
The closure $G$ in $\Aut(\Omega)$ of the group generated by $f_0$ is a compact abelian Lie subgroup of $\Aut(\Omega)$.
\end{claim}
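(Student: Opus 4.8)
The plan is to verify the three properties separately, the only substantial one being compactness. That $G$ is abelian is immediate: $\langle f_0\rangle$ is cyclic, hence abelian, and in the Hausdorff topological group $\Aut(\Omega)$ the closure of an abelian subgroup is again abelian. Once compactness is known, $G$ is a compact subgroup of the real Lie group $\Aut(\Omega)$; being closed, it is an embedded Lie subgroup by Cartan's closed subgroup theorem. So everything reduces to showing that $G$ is compact.

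Since $\Omega$ is open in the second countable manifold $X$, the group $\Aut(\Omega)$ with its compact-open topology is separable and metrizable; hence it is enough to prove that $\langle f_0\rangle$ is relatively compact, that is, that every sequence of iterates $(f_0^{n_k})$ has a subsequence converging in $\Aut(\Omega)$. By Claim \ref{c:completely-invariant}, $f_0(\Omega)=\Omega$, so $f_0^n(\Omega)=\Omega$ for all $n\in\mathbb Z$; in particular every iterate of $f_0$ and of $f_0^{-1}$ maps $\Omega$ into $\overline\Omega\Subset X$. By Montel's theorem the family $\{f_0^n:n\in\mathbb Z\}$ is normal in the space of holomorphic maps $\Omega\to X$, so after passing to a subsequence we may assume $f_0^{n_k}\to g$ and $f_0^{-n_k}\to h$ uniformly on compact sets, with $g,h:\Omega\to\overline\Omega$ holomorphic.

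The crux, and the main obstacle, is to show that $g$ and $h$ are genuine automorphisms of $\Omega$ rather than maps that run into the boundary or fail to be onto; this is exactly the delicate argument already used in the proof of Claim \ref{c:completely-invariant}, which I would simply reproduce. As $f_0$ preserves $\omega$, each iterate has $\omega$-Jacobian identically $1$, so $g$ is nonconstant and therefore, by Hurwitz' theorem, injective with nowhere-vanishing Jacobian; thus $g(\Omega)$ is open in $X$. By \cite[Theorem 5.2]{DE1986}, every compact subset of $g(\Omega)$ is eventually contained in $f_0^{n_k}(\Omega)=\Omega$, so $g(\Omega)\subset\Omega$, and likewise $h(\Omega)\subset\Omega$. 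Since $g$ and $h$ now map compact sets into $\Omega$, passing to the limit in $f_0^{n_k}\circ f_0^{-n_k}=\id=f_0^{-n_k}\circ f_0^{n_k}$ gives $g\circ h=h\circ g=\id_\Omega$. Hence $g\in\Aut(\Omega)$ with inverse $h=\lim f_0^{-n_k}$, so $f_0^{n_k}\to g$ in $\Aut(\Omega)$. This shows that $\langle f_0\rangle$ is relatively compact, and therefore $G$ is compact, finishing the proof.
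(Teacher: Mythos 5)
Your proof is correct and follows essentially the same route as the paper: both reduce the claim to compactness and establish it via Montel's theorem, injectivity of the limit map from volume preservation and Hurwitz, \cite[Theorem 5.2]{DE1986} to keep the image inside $\Omega$, and the sequence of inverses to get surjectivity. The only cosmetic difference is that the paper packages the argument as the statement that all of $\Aut_\omega(\Omega)$ is compact (being closed in $\O(\Omega,X)$ and contained in the relatively compact set $\O(\Omega,\Omega)$), whereas you apply it directly to sequences of iterates of $f_0$.
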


\begin{proof}
It is clear that $G$ is an abelian Lie subgroup of the closed subgroup $\Aut_\omega(\Omega)$ of $\Aut(\Omega)$.  Since $\Omega$ is relatively compact in the Stein manifold $X$, it follows that $\O(\Omega,\Omega)$ is relatively compact in $\O(\Omega,X)$.  Hence, to show that $G$ is compact, it suffices to show that $\Aut_\omega(\Omega)$ is closed in $\O(\Omega,X)$.  Let $(g_j)$ be a sequence in $\Aut_\omega(\Omega)$ converging uniformly on compact subsets of $\Omega$ to a holomorphic map $g:\Omega\to X$.  Arguing as in the proof of Claim \ref{c:completely-invariant}, we find that $g$ is injective, volume-preserving, and $g(\Omega)\subset \Omega$.  Now consider the sequence of inverses $(g_j^{-1})$ in $\Aut_\omega (\Omega)$.  Montel's theorem yields a subsequence converging to a holomorphic map $h:\Omega\to X$.  Arguing as before, we obtain that $h(\Omega)\subset \Omega$, so $g(\Omega)=\Omega$.
\end{proof}

Every compact abelian Lie group is isomorphic to $\mathbb T^\ell\times A$, where $A$ is a finite abelian group, $\ell\geq 0$, and $\mathbb T$ is the circle.  If $\ell=0$, then $G=A$ is a finite abelian group, and it follows that some iterate of $f_0$ is $\id_\Omega$, contradicting Claim \ref{c:noperiodic}.  Hence, $\ell\geq 1$.  The action of $G$ on $\Omega$ is continuous and hence real-analytic \cite[Satz 6]{Kaup1965}.
The $G$-orbit $Gq$ of each $q\in\Omega_0$ is a compact real-analytic submanifold of $X$ (without boundary and not necessarily connected).

\begin{claim}  \label{c:totallyreal}
Every orbit $Gq$, $q\in\Omega_0$, is totally real.
\end{claim}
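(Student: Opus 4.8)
The plan is to argue by contradiction using Liouville's theorem, exploiting that $\Omega$ is relatively compact in the Stein manifold $X$. Since the action of $G$ on $\Omega$ is real-analytic and by biholomorphisms, each one-parameter subgroup $t\mapsto\exp(t\xi)$, with $\xi$ in the Lie algebra $\mathfrak h$ of $G$, is the real-time flow $\varphi_{X_\xi}^t$ of its infinitesimal generator $X_\xi$, a holomorphic vector field on $\Omega$. The tangent space $T_q(Gq)$ is the real span of the values $X_\xi(q)$, $\xi\in\mathfrak h$. If $Gq$ is $0$-dimensional it is trivially totally real, so I may assume otherwise. Then $Gq$ fails to be totally real at $q$ precisely when some nonzero $w\in T_q(Gq)$ has $iw\in T_q(Gq)$ as well; writing $w=X_{\xi_1}(q)$ and $iw=X_{\xi_2}(q)$ and putting $v=X_{\xi_1}$, $u=X_{\xi_2}$, this reads $u(q)=i\,v(q)$ with $v(q)\neq 0$. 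I aim to show this is impossible.

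Set $Z=u-i\,v$, a holomorphic vector field on $\Omega$ with $Z(q)=0$. The point I would exploit is that, because $G$ is abelian, its adjoint representation is trivial, so the fundamental vector fields are $G$-invariant: $g_\ast X_\xi=X_\xi$ for every $g\in G$. As each $g$ acts $\C$-linearly on tangent spaces, $Z$ is then $G$-invariant too, and since $Z(q)=0$ it vanishes along the whole orbit, $Z|_{Gq}=0$; that is, $u=i\,v$ on $Gq$. Consequently the curve $\sigma\mapsto\exp(\sigma\xi_2)q=\varphi_u^\sigma(q)$, which lies in the compact set $Gq$ for all real $\sigma$, has velocity $u=i\,v$ along $Gq$ and hence solves the same ODE as the imaginary-time flow of $v$; by uniqueness $\varphi_v^{i\sigma}(q)=\exp(\sigma\xi_2)q$ for all $\sigma$. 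Composing with the real-time flow, which is defined on all of $\Omega$ as the $G$-action, gives for $w=\tau+i\sigma$
\[ \varphi_v^{w}(q)=\varphi_v^{\tau}\bigl(\varphi_v^{i\sigma}(q)\bigr)=\exp(\tau\xi_1)\exp(\sigma\xi_2)\,q\in Gq. \]
Thus the complex-time orbit map $\Phi\colon\C\to X$, $\Phi(w)=\varphi_v^w(q)$, is entire with image in the compact set $Gq$.

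Since $X$ embeds as a closed submanifold of some $\C^N$, composing $\Phi$ with the embedding yields a bounded entire map $\C\to\C^N$, which is constant by Liouville's theorem. Hence $\Phi$ is constant, so $v(q)=\Phi'(0)=0$, contradicting $v(q)\neq 0$. This forces $Gq$ to be totally real; total reality at every point of the orbit then follows since $G$ acts transitively on $Gq$ by biholomorphisms. I expect the main obstacle to be promoting the infinitesimal relation $u(q)=i\,v(q)$ at the single point $q$ to a globally defined, bounded, complex-time orbit on which Liouville applies. This is exactly what the $G$-invariance of the fundamental vector fields—equivalently, the abelianness of $G$—supplies, by forcing $u=i\,v$ all along $Gq$ and thereby confining the imaginary-time flow of $v$ to the compact orbit.
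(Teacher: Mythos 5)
Your proof is correct, and it takes a genuinely different route from the paper's. The paper isolates the statement as a proposition about a torus acting by biholomorphisms on an open subset $V$ of a Stein manifold: it passes to the holomorphic envelope of $V$ to reduce to the case of a Stein domain, invokes Heinzner's complexification theorem to extend the $\mathbb T^\ell$-action to a $(\C^*)^\ell$-action on a larger Stein manifold, and then uses Morimoto's classification of abelian complex Lie groups to see that each complexified orbit is a quotient $\C^r\times(\C^*)^s$ in which the image of $\mathbb T^\ell$ lies in the totally real maximal torus. You instead argue infinitesimally: if total reality fails at $q$, two fundamental vector fields $v,u$ satisfy $u(q)=iv(q)\neq 0$; abelianness makes the fundamental fields $G$-invariant, so the holomorphic field $u-iv$ vanishes on all of the compact orbit $Gq$, and the identity $u=iv$ along $Gq$ turns the real $\xi_2$-flow into the imaginary-time flow of $v$, producing an entire orbit map $\C\to Gq$ of $v$ through $q$; a proper embedding $X\hookrightarrow\C^N$ and Liouville force this map to be constant, contradicting $v(q)\neq 0$. (Your reduction of "some $w$ with $iw\in T_q(Gq)$" to a single pair $u=X_{\xi_2}$, $v=X_{\xi_1}$ is legitimate by real-linearity of $\xi\mapsto X_\xi$, and the $\bar\partial$-check that your map $\Phi(\tau+i\sigma)=\exp(\tau\xi_1)\exp(\sigma\xi_2)q$ is holomorphic is exactly the relation $\partial_\sigma\Phi=u(\Phi)=iv(\Phi)=i\,\partial_\tau\Phi$.) Your argument is more elementary and self-contained -- it avoids complexification and the classification of abelian complex Lie groups entirely -- and it applies verbatim to any compact abelian group acting by biholomorphisms on a domain that is relatively compact in a Stein manifold (indeed to any ambient manifold admitting no nonconstant entire curve with relatively compact image). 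What the paper's heavier route buys is structural information: it exhibits the orbit explicitly inside a complexified $(\C^*)$-orbit, a picture that is consonant with the complexification techniques used elsewhere in Anders\'en--Lempert theory, whereas your argument only delivers the total reality itself -- which is all that is needed here.
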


The claim is an immediate consequence of the following result.

\begin{proposition}
Let a torus $\mathbb T^\ell$ act by biholomorphisms on an open subset $V$ of a Stein manifold $M$.  Then every orbit is totally real.
\end{proposition}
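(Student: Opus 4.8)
The plan is to exhibit on $V$ a $\mathbb T^\ell$-invariant K\"ahler form for which every orbit is isotropic, and then to read off total reality from the compatibility of this form with the complex structure. The underlying principle is the standard one that orbits of a Hamiltonian torus action are isotropic, together with the fact that an isotropic subspace of a K\"ahler manifold is automatically totally real.

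First I would manufacture an invariant potential. Since $M$ is Stein it carries a smooth strictly plurisubharmonic function $\rho_0$ (for instance the restriction of $|z|^2$ under a proper holomorphic embedding $M\hookrightarrow\C^N$). A continuous action by biholomorphisms is real-analytic \cite{Kaup1965}, so averaging over the compact group against normalised Haar measure, $\rho(x)=\int_{\mathbb T^\ell}\rho_0(t\cdot x)\,dt$, defines a smooth function on $V$ (the orbit $\mathbb T^\ell\cdot x$ lies in $V$). As an average of the strictly plurisubharmonic functions $x\mapsto\rho_0(t\cdot x)$ (pullbacks of $\rho_0$ by biholomorphisms), $\rho$ is again strictly plurisubharmonic, and it is $\mathbb T^\ell$-invariant by invariance of Haar measure. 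Set $\Omega=dd^c\rho=2i\,\partial\bar\partial\rho$. Then $\Omega$ is a $J$-invariant real $(1,1)$-form with $\Omega(w,Jw)>0$ for every nonzero real tangent vector $w$, so that $g(\cdot,\cdot)=\Omega(\cdot,J\cdot)$ is a Hermitian metric.

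Next I would show every orbit is $\Omega$-isotropic. Write $\xi_a$ for the fundamental vector field of $a$ in the Lie algebra $\mathfrak t=\mathbb R^\ell$, so $T_x(\mathbb T^\ell\cdot x)=\{\xi_a(x):a\in\mathfrak t\}$. Since the torus is abelian, $[\xi_a,\xi_b]=0$; and since each time map $\varphi^t_{\xi_a}$ is a biholomorphism preserving $\rho$, it preserves $d^c\rho$ (because $d^c$ commutes with pullback by biholomorphisms), whence $\mathcal L_{\xi_a}d^c\rho=0$. Expanding the exterior derivative of the $1$-form $d^c\rho$ gives
\[ \Omega(\xi_a,\xi_b)=\xi_a\big(d^c\rho(\xi_b)\big)-\xi_b\big(d^c\rho(\xi_a)\big)-d^c\rho([\xi_a,\xi_b]). \]
By the Leibniz rule and $\mathcal L_{\xi_a}d^c\rho=\mathcal L_{\xi_b}d^c\rho=0$, the first two terms reduce to $\pm\,d^c\rho([\xi_a,\xi_b])$, and all three terms then vanish because $[\xi_a,\xi_b]=0$. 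Thus $\Omega$ vanishes on $T_x(\mathbb T^\ell\cdot x)$ for every $x$.

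Finally I would combine isotropy with positivity. If some orbit failed to be totally real at a point $x$, there would be a nonzero $w\in T_x(\mathbb T^\ell\cdot x)\cap J\,T_x(\mathbb T^\ell\cdot x)$; writing $w=Ju$ with $u$ tangent to the orbit gives $Jw=-u$, so both $w$ and $Jw$ are tangent to the orbit. Isotropy forces $\Omega(w,Jw)=0$, whereas positivity gives $\Omega(w,Jw)=g(w,w)>0$, a contradiction. Hence every orbit is totally real. I expect the only delicate point to be the construction of the invariant potential, specifically checking that strict plurisubharmonicity (equivalently, positivity of the complex Hessian) is preserved under the averaging; the remaining steps are a one-line computation and a short piece of linear algebra.
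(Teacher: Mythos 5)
Your proof is correct, but it takes a genuinely different route from the paper. The paper passes to the envelope of holomorphy of $V$ to reduce to the case of a Stein $V$, invokes Heinzner's complexification theorem to extend the $\mathbb T^\ell$-action to a $(\C^*)^\ell$-action on a larger Stein manifold, and then uses Morimoto's classification of abelian complex Lie groups to identify the orbit with a compact torus sitting inside a quotient of the form $\C^r\times(\C^*)^s$, where it is totally real for structural reasons. You instead restrict a strictly plurisubharmonic function of $M$ to $V$, average it over the torus (smoothness of the average being licensed by Kaup's real-analyticity theorem, which the paper also uses), and observe that for the resulting invariant K\"ahler form $\Omega=dd^c\rho$ the orbits are isotropic --- your computation via $\mathcal L_{\xi_a}d^c\rho=0$ and $[\xi_a,\xi_b]=0$ is the standard one, and the final step (isotropic plus $\Omega(w,Jw)>0$ forces $T_xN\cap JT_xN=0$) is sound. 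Your argument is more elementary and self-contained: it avoids the detour through the envelope of holomorphy (needed in the paper because Heinzner's theorem wants a Stein domain) and the two external structure theorems, and it really only uses that $V$ carries a smooth strictly plurisubharmonic function. What the paper's approach buys is extra structural information --- the $\mathbb T^\ell$-orbit is exhibited as the maximal compact subgroup of an ambient $(\C^*)^\ell$-orbit --- though none of that is needed for the application. The only points worth spelling out a little more carefully in your write-up are the differentiation under the integral sign giving smoothness of $\rho$, and the fact that positive-definiteness of the complex Hessian survives averaging because each integrand is positive and depends continuously on the group variable; both are routine.
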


\begin{proof}
Let $E$ be the holomorphic envelope of $V$.  It is a Stein Riemann domain $\pi:E\to M$ with a holomorphic embedding $\iota:V\to E$ such that $\pi\circ\iota$ is the inclusion $V\hookrightarrow M$.  The action can be extended to a $\mathbb T^\ell$-action by biholomorphisms on $E$.  Hence we can assume that $V$ is Stein.

By Heinzner's complexification theorem \cite{Heinzner1991}, there exists a Stein manifold $\tilde V$ containing $V$ as an open subset, such that the action of $\mathbb T^\ell$ on $V$ extends to an action of the complexification $(\C^*)^\ell$ on $\tilde V$.  Let $I_q$ be the $(\C^*)^\ell$-stabiliser of $q\in V$.  The action induces an injective holomorphic immersion $\Psi_q : (\C^*)^\ell/I^q\to \tilde V$, whose image is the $(\C^*)^\ell$-orbit of $q$.  Let $\pi_q : (\C^*)^\ell\to (\C^*)^\ell/I^q$ be the quotient map and consider the compact real Lie subgroup $\pi_q(\mathbb T^\ell)$ of $(\C^*)^\ell/I^q$.  We will show that $\pi_q(\mathbb T^\ell)$ is totally real in $(\C^*)^\ell/I^q$; it follows that the $\mathbb T^\ell$-orbit $\Psi_q(\pi_q(\mathbb T^\ell))$ of $q$ is totally real.  

By the classification of abelian complex Lie groups \cite{Morimoto1966}, $(\C^*)^\ell/I^q$ is isomorphic to a product $H\times \C^r\times (\C^*)^s$, where $H$ is a complex Lie group with no nonconstant holomorphic functions.  Since $\tilde V$ is Stein, $H$ is trivial, so $\pi_q(\mathbb T^\ell)$ is contained in the totally real maximal torus in $(\C^*)^s$.
\end{proof}

Next we show that there is $q\in\Omega_0$ whose orbit $Gq$ is $\O(X)$-convex.  The special orbit $Gq$ will then have a basis of open neighbourhoods that are Stein and Runge.  We denote by $\widehat K$ the $\O(X)$-hull of a compact subset $K$ of $X$.

\begin{lemma}   \label{l:difficult}
Let $q_0, q_1\in\Omega_0$.  If $q_1\in \widehat {Gq_0}\setminus G q_0$, then $\widehat{Gq_1}\subset \widehat {Gq_0}\setminus Gq_0$.
\end{lemma}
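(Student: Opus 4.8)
The plan is to prove the two inclusions $\widehat{Gq_1}\subseteq\widehat{Gq_0}$ and $\widehat{Gq_1}\cap Gq_0=\varnothing$ separately. The first is soft and rests only on the invariance of the $\O(X)$-hull under the dynamics; the second is the real content of the lemma and is where the total reality established in Claim~\ref{c:totallyreal} must enter.

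For the first inclusion, I would begin from the fact that, since $f_0$ is a holomorphic automorphism of $X$, it preserves $\O(X)$ by composition and therefore commutes with the hull operation: $f_0^{\pm 1}(\widehat K)=\widehat{f_0^{\pm 1}(K)}$ for every compact $K\subseteq X$. Because $Gq_0$ is a $G$-orbit and $f_0^j|_\Omega\in G$ for every $j$, we have $f_0^j(Gq_0)=Gq_0$, so $\widehat{Gq_0}$ is invariant under all iterates of $f_0$. As $G=\overline{\langle f_0\rangle}$ acts on $\Omega$ and $\widehat{Gq_0}$ is closed, it follows that $\widehat{Gq_0}\cap\Omega$ is $G$-invariant: for $x\in\widehat{Gq_0}\cap\Omega$ and $g=\lim_k f_0^{n_k}\in G$ one has $g(x)=\lim_k f_0^{n_k}(x)\in\widehat{Gq_0}$. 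Since $q_1\in\Omega_0\subseteq\Omega$ lies in $\widehat{Gq_0}$, its whole orbit satisfies $Gq_1\subseteq\widehat{Gq_0}$; taking hulls and using idempotency of the $\O(X)$-hull gives $\widehat{Gq_1}\subseteq\widehat{\widehat{Gq_0}}=\widehat{Gq_0}$.

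For the disjointness I would argue by contradiction. The same invariance argument shows $\widehat{Gq_1}\cap\Omega$ is $G$-invariant, so if some point of $Gq_0\subseteq\Omega$ lay in $\widehat{Gq_1}$, then all of $Gq_0$ would, whence $\widehat{Gq_0}\subseteq\widehat{Gq_1}$ and, with the first inclusion, $\widehat{Gq_0}=\widehat{Gq_1}$. It remains to rule this out when $q_1\notin Gq_0$, so that $Gq_0$ and $Gq_1$ are disjoint compact totally real orbits. The mechanism I expect to work is to produce, at a point $p\in Gq_0$, a function $\psi\in\O(X)$ that is strictly dominated on all of $\widehat{Gq_0}$ away from $p$; then $|\psi(p)|>\max_{Gq_1}|\psi|$ (as $Gq_1\subseteq\widehat{Gq_0}$ and $p\notin Gq_1$), so $p\notin\widehat{Gq_1}$, contradicting $Gq_0\subseteq\widehat{Gq_1}$. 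To locate such a $\psi$, I would average a strictly plurisubharmonic exhaustion of $X$ over the compact group $G$, obtaining a $G$-invariant strictly plurisubharmonic function $\tilde\rho$ on $\Omega$ that is constant on each orbit; the orbit $Gq_0$ sits in a level set $\{\tilde\rho=c_0\}$ whose sublevel set is strictly pseudoconvex along it. A local holomorphic peak function at $p$ for this strictly pseudoconvex boundary, combined with Rossi's local maximum modulus principle, should exhibit $p$ as a peak point of $\widehat{Gq_0}$ and thereby yield the desired separation.

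The main obstacle, and the reason the lemma is genuinely difficult, is that total reality alone is insufficient: a compact totally real submanifold of a Stein manifold need not be $\O(X)$-convex, so $\widehat{Gq_0}$ may contain analytic structure off $Gq_0$, and one cannot simply interpolate holomorphically to separate $Gq_0$ from $Gq_1$. Compounding this, the natural $G$-invariant strictly plurisubharmonic function $\tilde\rho$ is defined only on $\Omega$, whereas $\widehat{Gq_0}$ is a priori merely a compact subset of $X$ that may leave $\Omega$, and membership in the hull is controlled by functions on all of $X$, not by $\tilde\rho$. Reconciling the local, $\Omega$-based convexity coming from total reality and the group action with the global nature of the $\O(X)$-hull — presumably by localising near the point $p\in\Omega$, invoking local polynomial convexity of totally real submanifolds together with the local maximum principle, or alternatively by passing through the Heinzner complexification so that each orbit is modelled as the maximal torus of a $(\C^*)^s$-orbit where separating functions are explicit — is the crux of the proof.
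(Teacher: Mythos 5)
Your first inclusion and your reduction of the disjointness statement to finding a peak point of $\widehat{Gq_0}$ lying on the orbit $Gq_0$ are correct and agree with the paper. The genuine gap is in the step you yourself flag as the crux: you never produce the peak point, and the mechanism you propose does not work as stated. The averaged function $\tilde\rho$ is plurisubharmonic only on $\Omega$ and is not induced by anything defined on all of $X$, so it exerts no control over the $\O(X)$-hull $\widehat{Gq_0}$, which may leave $\Omega$; in particular there is no reason that $\widehat{Gq_0}$ should lie in $\{\tilde\rho\le c_0\}$ near $p$, which is exactly what the local-peak-function-plus-Rossi argument needs. Moreover, since $\tilde\rho$ is constant on the orbit, points of $Gq_0$ may be critical points of $\tilde\rho$, so $p$ need not be a smooth strictly pseudoconvex boundary point of the sublevel set and the local holomorphic peak function you invoke need not exist. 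Finally, total reality of the orbits, which you expect to be the engine of this lemma, is not used in its proof at all; it is used elsewhere (Claims \ref{c:dim-n-ruled-out} and \ref{c:approx}).

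The paper closes the gap with soft uniform-algebra theory. Let $A$ be the uniform algebra on $\widehat{Gq_0}$ of uniform limits of restrictions of functions in $\O(X)$. By the definition of the hull, $\sup_{\widehat{Gq_0}}\lvert f\rvert=\sup_{Gq_0}\lvert f\rvert$ for every $f\in\O(X)$, hence for every $f\in A$; thus $Gq_0$ is a boundary for $A$ and contains its Shilov boundary. By Bishop's peak point theorem on a compact metrizable space, peak points are dense in the Shilov boundary, so $A$ has a peak point in $Gq_0$, and the group action (which preserves $A$, since $f_0$ is a global automorphism of $X$ fixing $\widehat{Gq_0}$ setwise) moves it to every point of $Gq_0$. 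A peaking function at $p\in Gq_0$ satisfies $\lvert f(p)\rvert=1>\sup_{Gq_1}\lvert f\rvert$ because $Gq_1\subset\widehat{Gq_0}\setminus\{p\}$, so $p\notin\widehat{Gq_1}$. Substituting this abstract existence statement for your explicit strictly plurisubharmonic construction repairs your argument; as written, the construction does not go through.
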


\begin{proof}
From the group action, it is clear that $Gq_1\subset \widehat{Gq_0}$, so $\widehat{Gq_1}\subset \widehat{Gq_0}$, and $Gq_1\cap Gq_0=\varnothing$.  We need to show that $\widehat{Gq_1}\cap Gq_0=\varnothing$.

Let the uniform algebra $A$ consist of those continuous functions on the compact set $\widehat{Gq_0}$ that are uniform limits of holomorphic functions on $X$.  It is well known that $A$ has a peak point in $Gq_0$.  Using the group action, we see that every point in $Gq_0$ is a peak point for $A$.

Now let $p\in Gq_0$.  Since $p$ is a peak point for $A$, there is $f\in A$ with $\lvert f(p)\rvert=1$ but $\lvert f\rvert<1$ on $\widehat{Gq_0}\setminus\{p\}\supset Gq_1$, so $p\notin\widehat{Gq_1}$.
\end{proof}

\begin{claim}
There is $q\in \Omega_0$ whose orbit $Gq$ is $\O(X)$-convex.
\end{claim}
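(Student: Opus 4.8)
The plan is to apply Zorn's lemma to the family $\mathcal{H}=\{\widehat{Gq}:q\in\Omega_0\}$, partially ordered by inclusion, to produce a minimal element, and then to show that a minimal element is already an orbit. The engine is Lemma \ref{l:difficult}, which I would first reformulate as a monotonicity principle: if $q,q'\in\Omega_0$ and $q\in\widehat{Gq'}$, then $Gq\subseteq\widehat{Gq'}$, and hence $\widehat{Gq}\subseteq\widehat{Gq'}$. Indeed, either $q\in Gq'$, so $Gq=Gq'$, or else $q\in\widehat{Gq'}\setminus Gq'$, and Lemma \ref{l:difficult} gives $\widehat{Gq}\subseteq\widehat{Gq'}\setminus Gq'$.

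First I would verify the chain condition for Zorn's lemma. Let $\{\widehat{Gq_i}\}_{i\in I}$ be a chain in $\mathcal{H}$ and set $L=\bigcap_i\widehat{Gq_i}$. As a nested intersection of nonempty compact sets, $L$ is nonempty and compact, and being an intersection of $\O(X)$-convex sets it is itself $\O(X)$-convex. If $L$ meets $\Omega_0$, choose any $q\in L\cap\Omega_0$; then $q\in\widehat{Gq_i}$ for every $i$, so the monotonicity principle yields $Gq\subseteq\widehat{Gq_i}$ for every $i$, whence $Gq\subseteq L$ and $\widehat{Gq}\subseteq\widehat L=L$. Thus $\widehat{Gq}$ is a lower bound for the chain inside $\mathcal{H}$, and Zorn's lemma produces a minimal element $\widehat{Gq_\ast}$.

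I would then show that a minimal element is an orbit. Suppose $\widehat{Gq_\ast}\neq Gq_\ast$, and choose a point $q'\in(\widehat{Gq_\ast}\setminus Gq_\ast)\cap\Omega_0$. By Lemma \ref{l:difficult}, $\widehat{Gq'}\subseteq\widehat{Gq_\ast}\setminus Gq_\ast$; since $q_\ast\in Gq_\ast$, this inclusion omits $q_\ast$, so $\widehat{Gq'}$ is a strictly smaller member of $\mathcal{H}$, contradicting minimality. Hence $Gq_\ast$ is $\O(X)$-convex, as required.

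The main obstacle is the recurring requirement that the relevant hull points lie in the \emph{open} set $\Omega_0$ and not merely in its closure: that $L\cap\Omega_0\neq\varnothing$ in the chain step, and that $(\widehat{Gq_\ast}\setminus Gq_\ast)\cap\Omega_0\neq\varnothing$ in the minimality step. Each hull $\widehat{Gq_i}$ contains the orbit $Gq_i$, which meets $\Omega_0$, so $L$ at least meets $\overline{\Omega_0}$; the real difficulty is to prevent the shrinking orbits from escaping to the boundary $\partial\Omega_0$. I expect to control this using that $G$ is compact, so that the orbit map $q\mapsto Gq$ is continuous and the orbits occurring along a chain remain in a fixed compact subset of $\Omega$, together with the local maximum modulus principle, which should prevent $\widehat{Gq_\ast}\setminus Gq_\ast$ from collapsing onto $\partial\Omega_0$. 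Confining the points produced above to $\Omega_0$ in this way is the step I anticipate to be the most delicate.
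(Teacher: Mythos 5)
Your argument is the paper's own proof, up to the cosmetic switch from ordering by reverse inclusion and taking a maximal element to ordering by inclusion and taking a minimal one: the chain bound obtained from a point of the compact intersection, and the use of Lemma \ref{l:difficult} to show that a minimal hull must equal the orbit, both appear there verbatim. The one point you flag as delicate --- that the chosen points $q\in\bigcap\mathscr C$ and $q'\in\widehat{Gq_\ast}\setminus Gq_\ast$ must actually lie in $\Omega_0$ for their hulls to belong to the poset and for Lemma \ref{l:difficult} to apply --- is passed over in silence in the paper, which simply writes ``Take $q\in\bigcap\mathscr C$'' and ``if there is a point $p\in\widehat{Gq}\setminus Gq$'' without verifying membership in $\Omega_0$, so you are right that this is the residual issue, but you will not find its resolution in the paper's proof.
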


\begin{proof}
We define a partial order on the set $\{\widehat{Gp}: p\in \Omega_0\}$ by reverse inclusion:  $\widehat{Gq}\leq\widehat{Gp}$ if and only if $\widehat{Gp}\subset\widehat{Gq}$.

For $\widehat{Gq}$ to be maximal means that if $\widehat{Gp}\subset \widehat{Gq}$, then $\widehat{Gp}=\widehat{Gq}$.  It follows that $\widehat{Gq}={Gq}$.  Indeed, if there is a point $p\in \widehat{Gq}\setminus Gq$, then by Lemma \ref{l:difficult}, $\widehat{Gp}\subsetneqq \widehat{Gq}$.

Let $\mathscr C$ be a totally ordered subset of $\{\widehat{Gq}: q\in \Omega_0\}$.  The intersection $\bigcap\mathscr C$ is not empty by compactness.  Take $q\in \bigcap\mathscr C$. Then $\widehat{Gq}$ is an upper bound for $\mathscr C$.  Zorn's lemma now provides a maximal element.
\end{proof}

From now on we fix $q\in\Omega_0$ such that $Gq$ is $\O(X)$-convex.  Since $Gq$ is totally real, $\dim Gq\leq n$.  By Claim \ref{c:noperiodic}, $\dim Gq\geq 1$.  Next we use the assumption that $\omega$ is exact to rule out $\dim Gq=n$.  This is the only place in the proof of Theorem \ref{t:expelling} where exactness of $\omega$ is used.

\begin{claim}  \label{c:dim-n-ruled-out}
$\dim Gq<n$.
\end{claim}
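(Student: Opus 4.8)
The plan is to argue by contradiction: assuming $\dim Gq=n$, I would derive from the exactness of $\omega$ that $\int_{Gq}\omega=0$, while showing independently that $\int_{Gq}\omega\neq 0$. Write $L=Gq$. Since $G$ is a compact abelian Lie group and $G_q$ is a (necessarily normal) closed subgroup, $L=G/G_q$ is itself a compact abelian Lie group, hence parallelisable, so orientable, and it carries a bi-invariant Haar volume form $\nu$ that is $G$-invariant and nowhere vanishing. In particular $L$ is a closed real-analytic $n$-manifold without boundary. Because $L$ is totally real (Claim \ref{c:totallyreal}) of maximal real dimension $n=\dim_\C X$, at each $p\in L$ the real tangent space $T_pL$ is a real form of the complex vector space $T_pX$, so any real basis of $T_pL$ is a $\C$-basis of $T_pX$. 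Evaluating the nowhere-vanishing holomorphic $n$-form $\omega$ on such a basis therefore gives a nonzero complex number, and I conclude that the pullback $\omega|_L$ is a nowhere-vanishing complex-valued $n$-form on $L$.

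The crux of the argument—and the step I expect to be the main obstacle—is to rule out cancellation and show $\int_L\omega\neq 0$; the totally real condition alone only yields that $\omega|_L$ is nonvanishing, not that it integrates to something nonzero. Here I would exploit that $G$ lies in $\Aut_\omega(\Omega)$, hence preserves $\omega$, and acts transitively on $L$. Since $G$ preserves $\omega$, the form $\omega|_L$ is $G$-invariant, and so is $\nu$ by construction, whence the ratio $g=\omega|_L/\nu\colon L\to\C^*$ is a $G$-invariant nowhere-zero function. Transitivity of the $G$-action then forces $g$ to be constant, say $g\equiv c\in\C^*$. Fixing the orientation for which $\nu>0$, so that $\int_L\nu>0$, I obtain $\int_L\omega=c\int_L\nu\neq 0$. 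It is precisely the combination of $G$-invariance of $\omega$ with transitivity that pins the phase of the integrand and defeats cancellation.

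Finally I would invoke exactness—the one place where this hypothesis enters the proof of Theorem \ref{t:expelling}. Writing $\omega=d\eta$ for a holomorphic $(n-1)$-form $\eta$ on $X$, we get $\omega|_L=d(\eta|_L)$, so $\omega|_L$ is exact on $L$. As $L$ is compact without boundary, Stokes' theorem (applied componentwise and summed) gives $\int_L\omega=0$, contradicting the previous paragraph. Hence $\dim Gq\neq n$, and combined with the already established bound $\dim Gq\leq n$, this yields $\dim Gq<n$.
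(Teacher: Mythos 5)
Your proof is correct and is essentially the paper's own argument: both use that total reality in maximal dimension makes $\omega|_{Gq}$ nowhere vanishing, that invariance under the transitive group action makes it a nonzero constant multiple of the Haar form, and that exactness plus Stokes forces the integral (hence that constant) to vanish. You merely spell out explicitly the steps the paper compresses into the single sentence ``since $\omega|_{Gq}$ is exact and invariant\dots we conclude that $\omega|_{Gq}=0$.''
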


\begin{proof}
Suppose that $\dim Gq=n$.  Then $Gq$ may be identified with a Lie group of the form $\mathbb T^n\times A$, where $A$ is a finite abelian group, embedded as a totally real real-analytic submanifold of $X$.  In suitable local holomorphic coordinates $z_1,\ldots,z_n$ on $X$ at each of its points, $Gq$ is defined by the equations $\Im z_1=0,\ldots,\Im z_n=0$, so $\omega\vert_{Gq}$ has no zeros.  On the other hand, since $\omega\vert_{Gq}$ is exact and invariant under the action of $f_0$ and hence the action of $G$ and hence the action of $Gq$ on itself, we conclude that $\omega\vert_{Gq}=0$.
\end{proof}

We have established that $1\leq \dim Gq\leq n-1$.  Take $s\geq 1$ such that $f_0^s$ is in the identity component $G_0$ of $G$.  There is a 1-parameter subgroup $(g_t)_{t\in\mathbb R}$ of $G_0$ such that $g_1=f_0^s$.  Consider the vector field $\xi=\dfrac{d}{dt}g_t\bigg\vert_{t=0}$ on $\Omega$.  It is holomorphic, divergence-free, tangent to the $G$-orbits in $\Omega$, and it does not have any zeros, for if it did, $f_0$ would have a periodic point in $\Omega$, contradicting Claim \ref{c:noperiodic}.

\begin{claim}  \label{c:approx}
There is a Stein and Runge neighbourhood $U$ of $Gq$ such that $\xi$ is uniformly approximable on compact subsets of $U$ by divergence-free holomorphic vector fields on~$X$.  
\end{claim}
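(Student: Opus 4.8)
The plan is to translate the problem into one about closed holomorphic $(n-1)$-forms and then solve it by a cohomology-vanishing argument followed by Runge approximation. Since $\omega$ is nowhere zero, contraction $v\mapsto v\rfloor\omega$ is an isomorphism from holomorphic vector fields to holomorphic $(n-1)$-forms, carrying divergence-free fields to closed forms. So it suffices to produce a Stein and Runge neighbourhood $U$ of $Gq$ on which the closed holomorphic $(n-1)$-form $\alpha=\xi\rfloor\omega$ is a limit, uniformly on compact subsets, of closed holomorphic $(n-1)$-forms on $X$. Since $Gq$ is totally real and $\O(X)$-convex, I would choose $U$ from its basis of Stein and Runge neighbourhoods, taking $U$ small enough to deformation retract onto $Gq$ (so that $H^{n-1}(U)\cong H^{n-1}(Gq)$) and to lie inside $\Omega$ (so that $\alpha$ is defined and closed on $U$).

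The key step is to show that the de Rham class of $\alpha$ in $H^{n-1}(U)$ vanishes. Via the retraction it is enough to check that the pullback $\alpha|_{Gq}$ vanishes. Here I would use that $\xi$ is tangent to the orbit $Gq$: at each $p\in Gq$, the vector $\xi(p)$ and every tangent vector to $Gq$ lie in the complex span of $T_p(Gq)$, which, because $Gq$ is totally real of real dimension $\dim Gq\leq n-1$ (by Claim \ref{c:dim-n-ruled-out}), has complex dimension at most $n-1$. Evaluating the alternating $\C$-$n$-linear form $\omega$ on the $n$ vectors $\xi(p),v_1,\dots,v_{n-1}$ drawn from this at-most-$(n-1)$-dimensional space gives zero, so $\alpha|_{Gq}=0$ and hence $[\alpha]=0$ in $H^{n-1}(U)$. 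I expect this cohomological vanishing to be the main obstacle: approximation of $\alpha$ by global closed forms is impossible if its class lies outside the image of $H^{n-1}(X)\to H^{n-1}(U)$, and the total-reality argument together with tangency of $\xi$ is exactly what forces the class to be zero. (Note that exactness of $\omega$ is not needed here, only the bound $\dim Gq<n$ that it already provided.)

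With $[\alpha]=0$ established, the holomorphic de Rham theorem on the Stein manifold $U$ yields a holomorphic $(n-2)$-form $\beta$ on $U$ with $d\beta=\alpha$. Since $U$ is Runge in $X$, Runge approximation for holomorphic sections of the bundle $\Lambda^{n-2}T^*X$ (using that $\O(U)$-convex compact subsets of $U$ are $\O(X)$-convex) produces holomorphic $(n-2)$-forms $\tilde\beta$ on $X$ approximating $\beta$ uniformly on compact subsets of $U$. By Cauchy estimates $d\tilde\beta\to d\beta=\alpha$ uniformly on compact subsets, and defining the divergence-free holomorphic vector field $v$ on $X$ by $v\rfloor\omega=d\tilde\beta$ gives, via the inverse of the contraction isomorphism, a divergence-free field on $X$ approximating $\xi$ on compact subsets of $U$. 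The remaining details, namely the existence of the retracting Stein and Runge neighbourhood and the precise form of Runge approximation for bundle sections, are standard.
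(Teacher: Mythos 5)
Your proposal is correct and follows essentially the same route as the paper: translate to the closed $(n-1)$-form $\xi\rfloor\omega$, show its restriction to $Gq$ vanishes as a form (using tangency of $\xi$ and $\dim Gq\le n-1$), conclude the class dies in a retracting neighbourhood, and then approximate via a primitive and Rungeness. Your linear-algebra argument for $\bigl(\xi\rfloor\omega\bigr)\big\vert_{Gq}=0$ merely unifies the two cases ($\dim Gq\le n-2$ and $\dim Gq=n-1$) that the paper treats separately, and you spell out the final Runge step that the paper leaves implicit.
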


\begin{proof}
Recall the bijective correspondence between divergence-free holomorphic vector fields $\xi$ and closed holomorphic $(n-1)$-forms $\beta$ given by $\beta=\xi\rfloor\omega$.  On a Stein and Runge open subset $U$ of $\Omega$, the vector field $\xi$ is approximable by divergence-free holomorphic vector fields on $X$ if and only if $\xi\rfloor\omega$ is approximable by closed holomorphic $(n-1)$-forms on $X$, which is the case if the cohomology class of $\xi\rfloor\omega$ in $H^{n-1}(U)$ lies in the image of $H^{n-1}(X)$.

Let $W$ be a tubular neighbourhood of $Gq$.  It suffices to show that the cohomology class of $\xi\rfloor\omega$ in $H^{n-1}(W)$ is zero.  In fact, the restriction of $\xi\rfloor\omega$ to $Gq$ is zero as a form and not only as a cohomology class.  This is clear if $\dim Gq\leq n-2$.  If $\dim Gq=n-1$ and we take vectors $v_1,\ldots,v_{n-1}$ in the tangent space of $Gq$ at $x\in Gq$, then the vectors $\xi(x), v_1,\ldots,v_{n-1}$ are linearly dependent because $\xi$ is tangent to $Gq$, so $\xi\rfloor\omega(v_1,\ldots,v_{n-1}) =\omega(\xi(x),v_1,\ldots,v_{n-1})=0$.
\end{proof}

By Claim \ref{c:approx}, we can approximate $\xi$ by a divergence-free holomorphic vector field $\tilde \eta$ on $X$ uniformly on $Gq$.  Since $X$ has the volume density property, $\tilde \eta$ can be approximated uniformly on $Gq$ by a vector field $\eta$ which is the sum of complete divergence-free holomorphic vector fields $v_1, \dots, v_m$ on $X$.  Let $\varphi^j_t$ be the flow of $v_j$.  Let $\eta$ approximate $\xi$ so well that $\eta$ has no zeros in $Gq$.

For $t\in \C$, let
\[ \Psi_t=\varphi^m_t\circ \cdots\circ \varphi^1_t \in\Aut_\omega(X). \] 
Note that 
\[ \frac{\partial}{\partial t}\Psi_t(x)\bigg\vert_{t=0}=\eta(x) \quad\textrm{for all }x\in X. \]
Recall our assumption that $(x_0,f_0)\in U_k$, so $\Omega\times\{f_0\}\subset U_k$.
 Since $U_k$ is open and $Gq$ is compact, there is a neighbourhood $W$ of $f_0$ in $\Aut_\omega(X)$ such that $Gq\times W\subset U_k$.

Since $\Psi_t\to \id_X$ as $t\to 0$, there is $\delta>0$ such that $\Psi_t\circ f_0\in W$ when $\lvert t\rvert<\delta$.  Hence 
\[ (\Psi_t\circ f_0)^j(x)\in \Lambda_k \quad \textrm{when } \lvert t\rvert<\delta, \  x\in Gq, \  j\geq 0. \]
Let $\lVert\cdot\rVert$ be a hermitian metric on $X$.  Cauchy estimates show that there is a constant $M$ such that for all $x\in Gq$ and $j\geq 0$,
\[ \bigg\lVert\frac{\partial}{\partial t}(\Psi_t\circ f_0)^j(x)\bigg\vert_{t=0}\bigg\rVert\leq M. \]

To elucidate how we will obtain a contradiction, let us pretend that $\Psi_t$ commutes with $f_0$.  Then 
\[ \frac{\partial}{\partial t}(\Psi_t\circ f_0)^j(x)\bigg\vert_{t=0}=j\frac{\partial}{\partial t}\Psi_t(f_0^j(x))\bigg\vert_{t=0}, \]
but since $\dfrac{\partial}{\partial t}\Psi_t(x)\bigg\vert_{t=0}\neq 0$ for all $x\in Gq$, we have
\[ \min_{y\in Gq} \bigg\lVert\frac{\partial}{\partial t}\Psi_t(y)\bigg\vert_{t=0}\bigg\rVert>0, \]
contradicting the estimate above.

In reality, we  use the fact that $f_0$ and $\xi$ commute, that is,
\[ d_x f_0(\xi (x)) = \xi(f_0(x)) \quad \textrm{for all } x\in\Omega. \]
By the chain rule, the derivative $\dfrac\partial{\partial t}(\Psi_t\circ f_0)^j(x)\bigg\vert_{t=0}$ is
\[ \frac{\partial}{\partial t}\Psi_t(f_0^j(x))\bigg\vert_{t=0}+d_{f_0^{j-1}(x)}f_0\big(\frac{\partial}{\partial t}\Psi_t(f_0^{j-1}(x))\bigg\vert_{t=0}\big)+d_{f_0^{j-2}(x)}f_0^2\big(\frac{\partial}{\partial t}\Psi_t(f_0^{j-2}(x))\bigg\vert_{t=0}\big)+\cdots, \]
that is,
\[ \eta(f_0^j(x))+d_{f_0^{j-1}(x)}f_0(\eta (f_0^{j-1}(x)))+d_{f_0^{j-2}(x)}f_0^2(\eta (f_0^{j-2}(x)))+\cdots. \]
Let 
\[ M=\sup\limits_{x\in Gq, \, j\geq 0}\lVert d_x f_0^j\rVert, \] 
which is finite since the images of the maps $f_0^j$ near $Gq$ are contained in $\Lambda_k \Subset X$.  Let 
\[ c=\min_{x\in Gq} \lVert \xi(x)\rVert >0. \]
Assume that $\lVert\xi-\eta\rVert\leq \dfrac{c}{2M}$ on $Gq$. Then for $x\in Gq$ and $i=0,\ldots,j-1$,
\[ \lVert d_{f_0^{j-i}(x)}f_0^i\big(\eta (f_0^{j-i}(x))\big)-\xi (f_0^j(x))\rVert=\lVert d_{f_0^{j-i}(x)}f_0^i\big(\eta (f_0^{j-i}(x))-\xi(f_0^{j-i}(x))\big)\rVert \leq\frac c 2. \]
Again we obtain a contradiction.  Namely, for $x\in Gq$, the derivative $\dfrac\partial{\partial t}(\Psi_t\circ f_0)^j(x)\bigg\vert_{t=0}$ is bounded as $j\to\infty$, but is also within $\dfrac c 2 j$ of $j\xi(f_0^j(x))$, whose norm is at least $cj$.

\section{Robustly non-expelling automorphisms}
\label{sec:robustly-non-expelling}

\noindent
Much interesting information can be extracted from the proof of Theorem \ref{t:expelling}.  First, the following result is evident from the proof of Claim \ref{c:Baire-arguments}.

\begin{proposition}   \label{p:G-delta}
Let $X$ be a complex manifold.  The expelling automorphisms of $X$ form a $G_\delta$ subset of $\Aut(X)$.
\end{proposition}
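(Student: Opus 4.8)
The plan is to rerun the argument in the proof of Claim~\ref{c:Baire-arguments}, but now in all of $\Aut(X)$, and to observe that this argument actually identifies the expelling automorphisms \emph{exactly} as the complement of an explicit $F_\sigma$ set, with no appeal to genericity. Recall that $f$ is expelling precisely when $X\setminus K_f$ is dense, equivalently when $K_f$ has empty interior, where $K_f$ is the set of points of $X$ with relatively compact forward orbit; the definition of $K_f$ makes sense for every $f\in\Aut(X)$. I would first fix an exhaustion $(\Lambda_k)_{k\geq 0}$ of $X$ by compact sets and a countable basis $\mathscr B$ of relatively compact open subsets of $X$, both available since a complex manifold is second countable, locally compact and $\sigma$-compact. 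As before, put $K_f^k=\{x\in X:f^j(x)\in\Lambda_k\textrm{ for all }j\geq 0\}$, so each $K_f^k$ is closed and $K_f=\bigcup_{k\geq 0}K_f^k$, and for $B\in\mathscr B$, $k\geq 0$ set
\[ E_B^k=\{f\in\Aut(X):f^j(\overline B)\subset\Lambda_k\textrm{ for all }j\geq 0\},\qquad F=\bigcup_{B\in\mathscr B,\,k\geq 0}E_B^k. \]

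Exactly as in Claim~\ref{c:Baire-arguments}, each $E_B^k$ is closed (using continuity of $f\mapsto f^j$, compactness of $\overline B$, and closedness of $\Lambda_k$), so $F$ is $F_\sigma$ and $\Aut(X)\setminus F$ is $G_\delta$. The proposition then comes down to the single assertion that $f$ is expelling if and only if $f\notin F$. Since $K_f^k$ is closed, $f\in E_B^k$ is equivalent to $\overline B\subset K_f^k$, and hence to $B\subset K_f^k$.

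To prove the equivalence I would argue both directions. If $f\in F$, then $B\subset K_f^k\subset K_f$ for some $B$ and $k$, so $K_f$ has nonempty interior and $f$ is not expelling. Conversely, if $f\notin F$, then $B\not\subset K_f^k$ for all $B$ and $k$; as $\mathscr B$ is a basis, this says that the open set $X\setminus K_f^k$ meets every nonempty open subset of $X$, hence is dense. Since $X$, being locally compact and Hausdorff, is a Baire space, the countable intersection $X\setminus K_f=\bigcap_{k\geq 0}(X\setminus K_f^k)$ of dense open sets is dense, so $f$ is expelling. Thus the expelling automorphisms are precisely $\Aut(X)\setminus F$, a $G_\delta$ set.

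The only real content is the converse half, which is exactly the step abbreviated as \lq\lq by Baire\rq\rq\ in Claim~\ref{c:Baire-arguments}: one recognises each $X\setminus K_f^k$ as a dense open set and applies the Baire category theorem to $X$ itself. I expect this to be the main (indeed only) obstacle; the rest is the bookkeeping already carried out in that proof. It is worth noting that here no hypothesis on $X$ beyond being a complex manifold is used, and that we do not need $F$ to have empty interior---only that it is $F_\sigma$ and coincides with the set of non-expelling automorphisms.
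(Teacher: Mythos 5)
Your proof is correct and takes essentially the same route as the paper, whose entire proof of this proposition is the remark that it is evident from the proof of Claim \ref{c:Baire-arguments}: you identify the expelling automorphisms exactly as the complement of the $F_\sigma$ set $F$, supplying the two applications of the Baire category theorem to $X$ itself that the paper leaves implicit. No issues.
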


Let $X$ be a Stein manifold of dimension $n\geq 2$ satisfying the volume density property with respect to a holomorphic volume form $\omega$.  When $\omega$ is not exact, it is an open question whether expelling automorphisms are generic in $\Aut_\omega(X)$ or, equivalently by Proposition \ref{p:G-delta}, dense.

\begin{theorem}  \label{t:strange-orbit}
Let $X$ be a Stein manifold of dimension $n\geq 2$ satisfying the volume density property with respect to a holomorphic volume form $\omega$.  Suppose that expelling automorphisms are not generic in $\Aut_\omega(X)$.

{\rm (a)}  Then $X$ has a robustly non-expelling volume-preserving holomorphic automorphism.

{\rm (b)}  Every robustly non-expelling volume-preserving holomorphic automorphism has a total orbit whose closure is the union of finitely many, mutually disjoint, n-dimensional, holomorphically convex, totally real, real-analytic tori in $X$.
\end{theorem}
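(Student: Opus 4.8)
The plan is to read both parts off the proof of Theorem \ref{t:expelling} in Section \ref{sec:expelling}, exploiting the fact that exactness of $\omega$ is used there in exactly one place, Claim \ref{c:dim-n-ruled-out}, which rules out $\dim Gq = n$. Every other step of that proof, including the concluding Cauchy-estimate argument, holds verbatim for an arbitrary holomorphic volume form. Dropping exactness, the argument can therefore no longer eliminate the case $\dim Gq = n$; as we shall see, it instead \emph{forces} it.

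For part (a), I would apply the contrapositive of Claim \ref{c:Baire-arguments}: if expelling automorphisms are not generic, then $K$ has nonempty interior, so $U_k = K_k^\circ \neq \varnothing$ for some $k$, as in the paragraph following that claim. Choosing a product neighbourhood $V\times W\subset U_k$ with $V$ a nonempty open subset of $X$ and $W$ open in $\Aut_\omega(X)$, the definition of $K_k$ gives $g^j(V)\subset\Lambda_k$ for all $g\in W$ and $j\geq 0$. Hence every $f_0\in W$ is robustly non-expelling, with witness $(W,V,\Lambda_k)$.

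For part (b), let $f_0$ be robustly non-expelling with witness $(W_0,V,K)$, and fix $k$ with $K\subset\Lambda_k$. Then $V\times W_0\subset K_k$, so $U_k=K_k^\circ$ contains $(x_0,f_0)$ for any $x_0\in V$, placing us in precisely the situation studied after Claim \ref{c:Baire-arguments}. Setting $A_{f_0}=\{x:(x,f_0)\in U_k\}$, which is open, relatively compact, and forward invariant, and taking $\Omega_0$ to be its connected component through $x_0$ and $\Omega$ the cycle of $\Omega_0$, I would run Claims \ref{c:completely-invariant}--\ref{c:totallyreal} and the subsequent $\O(X)$-convexity claim; each uses only the openness of $U_k$ and the volume-preserving property, not exactness. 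This yields the compact abelian Lie group $G=\overline{\langle f_0\rangle}\cong\mathbb T^\ell\times A$ with $\ell\geq 1$, acting with totally real real-analytic orbits, together with a point $q\in\Omega_0$ for which $Gq$ is $\O(X)$-convex and $1\leq\dim Gq\leq n$.

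It then remains to force $\dim Gq=n$ and to identify the orbit closure. Suppose $\dim Gq\leq n-1$. Since $Gq\times\{f_0\}\subset U_k$ and $Gq$ is compact, the tube lemma gives a neighbourhood $W$ of $f_0$ with $Gq\times W\subset U_k$, and then Claim \ref{c:approx} together with the concluding estimate of Section \ref{sec:expelling} --- all exactness-free once $\dim Gq\leq n-1$ --- produces the usual contradiction between the boundedness and the linear growth of $\frac{\partial}{\partial t}(\Psi_t\circ f_0)^j(x)\big\vert_{t=0}$. As $U_k$ genuinely exists, this is absurd, so $\dim Gq=n$. Finally, the closure of the total orbit $\{f_0^j(q):j\in\mathbb Z\}$ equals $Gq$ because $G=\overline{\langle f_0\rangle}$; since $\dim Gq=n$, each connected component of $Gq$ is an orbit of the identity component $G_0\cong\mathbb T^\ell$ of dimension $n$, hence an $n$-torus. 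There are finitely many such components, they are mutually disjoint, totally real and real-analytic, and each is $\O(X)$-convex by an Oka--Weil separation argument applied to the $\O(X)$-convex set $Gq$. The main obstacle is exactly this reinterpretation: recognising that the concluding argument of Theorem \ref{t:expelling}, which there combined with Claim \ref{c:dim-n-ruled-out} to yield a contradiction under exactness, must now be read as forcing $\dim Gq=n$, and checking that the open set $U_k$ supplied by robust non-expelling-ness furnishes all the openness and compactness inputs the remaining steps require.
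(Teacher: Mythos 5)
Your proposal is correct and follows essentially the same route as the paper: the authors' own proof of Theorem \ref{t:strange-orbit} consists precisely of the observation that exactness of $\omega$ enters the proof of Theorem \ref{t:expelling} only in Claim \ref{c:dim-n-ruled-out}, so that for a robustly non-expelling automorphism the orbit $Gq$ must have dimension $n$ and is the closure of a total orbit of the stated form. Your write-up merely makes explicit the inspection that the paper leaves to the reader, and all the individual steps (the product neighbourhood for part (a), the tube lemma, the exactness-free running of Claims \ref{c:completely-invariant}--\ref{c:totallyreal} and the convexity and approximation arguments) check out.
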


\begin{proof}
This is immediate from an inspection of the proof of Theorem \ref{t:expelling}.  The only part of the proof that can fail, and that must fail if expelling automorphisms are not generic, is the part where the existence of an orbit $Gq$ of dimension $n$ for the robustly non-expelling automorphism $f_0$ is ruled out.  The proof shows that such an orbit is the closure of a total $f_0$-orbit as described above.
\end{proof}

For the next result, we assume that the class of $\omega$ in $H^n(X)$ lies in the cup product image of $H^1(X)\times H^{n-1}(X)$.  In other words, $\omega$ is of the form $\alpha\wedge\beta+d\gamma$, where $\alpha$ is a closed 1-form on $X$, $\beta$ is a closed $(n-1)$-form, and $\gamma$ is an $(n-1)$-form (these forms may be taken to be holomorphic).  This is obvious if $\omega$ is exact; in particular, if $H^n(X)=0$.  Recall that $X$ has the homotopy type of a smooth manifold $S$ of real dimension at most $n$.  If $S$ is noncompact or $S$ is not orientable or $\dim S<n$, then $H^n(X)\cong H^n(S)=0$.  If $S$ is compact and orientable and $\dim S=n$, then $H^n(X)$ is 1-dimensional, and by Poincar\'e duality, the cup product $H^1(X)\times H^{n-1}(X)\to H^n(X)$ is a nondegenerate pairing.  In particular, the cup product is surjective if $H^1(X)\cong H^{n-1}(X)$ is nontrivial.  Thus the new assumption holds if $H^1(X)\neq 0$ or $H^{n-1}(X)\neq 0$.

If $Y$ is a Stein manifold with the volume density property, then the new assumption holds for $X=Y\times\C^*$.  If a connected linear algebraic group $G$ of dimension $n\geq 2$ is reductive but not semisimple, for example $(\C^*)^n$, then $H^1(G)\cong H^{n-1}(G)\neq 0$, so the new assumption holds for $G$.  If $G$ is semisimple, then $H^1(G)\cong H^{n-1}(G)=0$.  Furthermore, a bi-invariant Haar form on a reductive group is not exact \cite[Proposition 8.6]{KK2016}.  Hence, the new assumption does not capture any semisimple groups.

\begin{theorem}  \label{t:cohomological-condition}
Let $X$ be a Stein manifold of dimension $n\geq 2$ satisfying the volume density property with respect to a holomorphic volume form whose class in $H^n(X)$ lies in the cup product image of $H^1(X)\times H^{n-1}(X)$.  Suppose that expelling automorphisms are not generic in $\Aut_\omega(X)$.

{\rm (a)}  There is a nonempty open subset $U$ of $\Aut_\omega(X)$ such that for every $f\in U$, there is $s\geq 2$ such that the $s$-th power map $\Aut_\omega(X) \to \Aut_\omega(X)$ is not open at $f$.

{\rm (b)}  The closure of the total orbit described in Theorem \ref{t:strange-orbit} is the union of at least two tori.
\end{theorem}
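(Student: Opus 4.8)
The plan is to extract both statements from a closer inspection of the proofs of Theorems \ref{t:expelling} and \ref{t:strange-orbit}, adding one new cohomological input supplied by the cup-product hypothesis. First I would fix notation from that inspection. Assuming expelling automorphisms are not generic, Theorem \ref{t:strange-orbit} produces a nonempty open set $U\subset\Aut_\omega(X)$ of robustly non-expelling automorphisms, and for each $f\in U$ the closure of a total $f$-orbit is a disjoint union $T_1\sqcup\cdots\sqcup T_N$ of $n$-dimensional, $\O(X)$-convex, totally real, real-analytic tori, cyclically permuted by $f$. Writing $G=\overline{\langle f\rangle}$ with identity component $G_0$, the number of tori is $N=[G:G_0]$, and there is a nowhere-vanishing divergence-free holomorphic field $\xi$ on a neighbourhood $\Omega$ of $\bigcup_i T_i$, tangent to the tori, with $f^N$ the time-$1$ map of its flow on each $T_i$. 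The decisive point, read off from Claim \ref{c:approx} and the contradiction closing the proof of Theorem \ref{t:expelling}, is a dichotomy: \emph{if} the class of $\xi\rfloor\omega$ restricted to the tori lay in the image of $H^{n-1}(X)\to H^{n-1}(T_i)$, then $\xi$ would be approximable by sums of complete global divergence-free fields, the linear-growth-versus-boundedness argument would run, and $f$ would fail to be robustly non-expelling. Hence for every $f\in U$ this restricted class is \emph{not} approximable.

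For part (b) I would argue by contradiction, assuming $N=1$, so $G$ is connected and $T=T_1\cong\mathbb{T}^n$. On $T$ the form $\omega$ restricts to a nowhere-zero translation-invariant top form, so $[\omega|_T]\neq0$, and $\xi$ restricts to a nonzero constant field, whence $[\xi\rfloor\omega|_T]\neq0$ in $H^{n-1}(T)$. I would then feed in the hypothesis $[\omega]=[\alpha]\cup[\beta]$ with $\alpha,\beta$ closed holomorphic forms: restricting, and using the contraction identity $\xi\rfloor(\alpha\wedge\beta)=\alpha(\xi)\,\beta-\alpha\wedge(\xi\rfloor\beta)$ together with invariant representatives on $T$, I would express $[\xi\rfloor\omega|_T]$ in terms of the globally defined classes $[\beta|_T]$ and $[\alpha|_T]$, thereby placing it in the image of $H^{n-1}(X)\to H^{n-1}(T)$. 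This makes $\xi$ approximable, contradicting the dichotomy above; hence $N\geq2$, which is exactly the assertion of (b).

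For part (a), with $N\geq2$ now available, I would take $s=N$ and show that the $N$-th power map is not open at $f$. The mechanism is the cyclic symmetry: any $h$ sufficiently close to $f$ again lies in $U$ and, the permutation type being locally constant, cyclically permutes its own tori, so $h^N$ preserves each of them and, since $h$ carries $T_i$ to $T_{i+1}$, the restrictions of $h^N$ to the $N$ tori are mutually conjugate via $h$. Using Theorem \ref{t:perturbations} I would then build automorphisms $g_\nu\to f^N$, of the form $g_\nu=\psi_\nu\circ f^N$ with $\psi_\nu$ close to $\id_X$, chosen to prescribe \emph{different} jet data along $T_1$ than along $T_2,\dots,T_N$, so that a conjugacy invariant of the within-torus dynamics differs between $T_1$ and the other tori. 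Were $g_\nu=h^N$ for some $h$ near $f$, that invariant would have to agree across all tori; hence $g_\nu\notin P_N(\text{neighbourhood of }f)$, witnessing non-openness, and $U$ is the required open set.

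I expect the genuine difficulties to be two. In part (b), the hard step is the cohomological bookkeeping that places $[\xi\rfloor\omega|_T]$ in the image of $H^{n-1}(X)$: one must dispose of the exact term $d\gamma$ and the term $\alpha\wedge(\xi\rfloor\beta)$, presumably by averaging over the torus action so as to replace $\gamma$ and $\beta$ by invariant representatives and kill the Lie-derivative terms along $\xi$. In part (a), the main obstacle is that $\Aut_\omega(X)$ is merely a topological group, with no inverse-function theorem to hand, so non-openness must be established by the explicit construction above; the delicate points are the local constancy of the permutation type near $f$ and the choice of a clean conjugacy invariant of the within-torus maps that the independent perturbation along $T_1$ provably destroys while leaving the other tori untouched.
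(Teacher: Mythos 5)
Your proposal misses the one genuinely new idea in the paper's proof, and this creates a real gap in your part (b). You try to show that the \emph{dynamically defined} field $\xi$ (the generator of the one-parameter group through $f_0^s$) has $[\xi\rfloor\omega|_{T}]$ in the image of $H^{n-1}(X)\to H^{n-1}(T)$, via the identity $\xi\rfloor(\alpha\wedge\beta)=\alpha(\xi)\,\beta-\alpha\wedge(\xi\rfloor\beta)$ and averaging. This computation does not close: taking invariant representatives $\bar\alpha,\bar\beta$ on the torus, the first term $\bar\alpha(\xi)\bar\beta$ is indeed a multiple of the global class $[\beta|_T]$, but the second term represents $[\bar\alpha]\cup[\xi\rfloor\bar\beta]$, where $[\xi\rfloor\bar\beta]\in H^{n-2}(T)$ is determined by the dynamics and has no reason to come from $X$; the image of $H^{n-1}(X)$ in $H^{n-1}(T)\cong\C^n$ may be as small as a line, and averaging cannot dispose of the leftover term. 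The paper sidesteps this entirely by \emph{replacing} $\xi$: since $[\beta|_{Gq}]\neq 0$ (else $\omega|_{Gq}$ would be exact, ruled out as in Claim \ref{c:dim-n-ruled-out}), one takes $\xi$ to be the unique $\mathbb T^n$-invariant field with $[\xi\rfloor\omega]=[\beta|_{Gq}]$; this field is automatically nowhere zero, $G_0$-invariant, and approximable because its contraction class is global by fiat. The price is that the new $\xi$ commutes only with $f_0^s$, not with $f_0$ — and that is precisely where (a) and (b) come from. If $Gq$ is connected, $f_0$ acts on it homotopically trivially, so the new $\xi$ is $f_0$-invariant and the contradiction of Theorem \ref{t:expelling} runs unchanged; hence at least two tori, which is (b).

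Your part (a) is also off track. The direct construction of witnesses $g_\nu\to f^N$ with $g_\nu\notin\{h^N\}$ is unsubstantiated: nearby automorphisms need not have tori close to those of $f$ with locally constant permutation type (no such continuity is proved or available); the perturbations supplied by Theorem \ref{t:perturbations} control only finite jets at finitely many points and need not preserve the tori, so your ``within-torus conjugacy invariant'' is neither defined for $g_\nu$ nor controllable by jets (the torus dynamics may be an irrational rotation, whose conjugacy invariants are invisible to finite jets). The paper's argument is instead indirect and needs no such construction: with the new $\xi$ above, \emph{if} the $s$-th power map (where $s\geq 2$ is the order of $f_0$ in $G/G_0$, not your $N$) were open at $f_0$, then for small $t$ the perturbation $\Psi_t\circ f_0^s$ would equal $h_t^s$ with $h_t\in W$, so $(\Psi_t\circ f_0^s)^j(x)=h_t^{sj}(x)$ would remain in $\Lambda_k$, and the bounded-versus-linear-growth contradiction would run with $f_0^s$ in place of $f_0$. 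Non-openness is therefore forced at every $f$ in the open set from which $f_0$ was chosen. Note also that in the paper (a) does not depend on (b), whereas your (a) needs $N\geq 2$ from your (already gapped) (b).
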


\begin{proof}
By assumption, $X$ has a robustly non-expelling automorphism $f_0$ with an $n$-dimensional orbit $Gq$ as in the proof of Theorem \ref{t:expelling}.  Write $\omega = \alpha\wedge\beta+d\gamma$ as above.  If $\beta$ is exact on $Gq$, then so is $\omega$, which is absurd as shown in the proof of Claim \ref{c:dim-n-ruled-out}.  Hence there is a nonzero class $b$ in the image of $H^{n-1}(X)$ in $H^{n-1}(Gq)$, namely the class of $\beta$.  

Recall the identification of $Gq$ with the Lie group $\mathbb T^n\times A$, where $A$ is a finite abelian group.  Every class in $H^{n-1}(Gq)$ has a unique $\mathbb T^n$-invariant representative.  The assignment $\xi\mapsto [\xi\rfloor\omega]$ is an isomorphism to $H^{n-1}(Gq)$ from the space of $\mathbb T^n$-invariant real-analytic vector fields on $Gq$.  Let $\xi$ have $[\xi\rfloor\omega]=b$.  Then $\xi$ has no zeros and extends to a $G_0$-invariant divergence-free holomorphic vector field on a Stein and Runge neighbourhood $U$ of $Gq$ (as before, $G_0$ denotes the identity component of $G$).  As in the proof of Claim \ref{c:approx}, we can show that $\xi$ is uniformly approximable on compact subsets of $U$ by divergence-free holomorphic vector fields on $X$.

The contradiction at the end of the proof of Theorem \ref{t:expelling} now goes through in either of two circumstances.  First, recall that $s\geq1$ was chosen so that $f_0^s\in G_0$.  If the image of the neighbourhood $W$ of $f_0$ under the $s$-th power map is a neighbourhood of $f_0^s$, then the contradiction goes through with $f_0$ replaced by $f_0^s$.  Also, note that at the beginning of the proof of Theorem \ref{t:expelling}, we made a choice of $(z_0,f_0)$ from the open set $U_k$.  Thus (a) is proved.  Second, if $Gq$ is connected, then $\xi$ is $f_0$-invariant and the contradiction goes through unchanged.  This proves (b).
\end{proof}

\section{Genericity of chaotic automorphisms}
\label{sec:proof}

\noindent
In this section we prove Theorem \ref{t:main}.  We assume that $X$ is a Stein manifold of dimension at least $2$ satisfying the volume density property with respect to an exact holomorphic volume form $\omega$.  We will show that $\Aut_\omega(X)$ contains a dense $G_\delta$ subset consisting of chaotic automorphisms.  Let $U$ and $V$ be nonempty open subsets of $X$ and let
\[ A = \{f\in\Aut_\omega(X) : f \textrm{ has a transverse cycle through } U \textrm{ and } V \}. \]
Recall that a fixed point $p$ of an automorphism $g$ is said to be transverse if the derivative of $g$ at $p$ does not have 1 as an eigenvalue.  A cycle of $g$ of length $k$ is called transverse if the points of the cycle are transverse fixed points of $g^k$.  Since transverse fixed points are stable under perturbation, $A$ is open.  Hence, by Touhey's formulation of chaos, it suffices to show that $A$ is dense in $\Aut_\omega(X)$.

Let $f\in\Aut_\omega(X)$.  We need to show that $f$ can be uniformly approximated on any compact subset $K$ of $X$ by elements of $A$.  By Theorem \ref{t:expelling}, we may assume that both $f$ and $f^{-1}$ are expelling.  We may also assume that $K$ is $\O(X)$-convex.

Choose $p\in U$ such that neither the forward $f$-orbit nor the backward $f$-orbit of $p$ is contained in $f(K)$.  Choose $q\in V$ with the same property.  Then there are $n_1, n_2, m_1, m_2 \geq 1$ such that the points $f^{n_1+1}(p)$, $f^{-n_2}(p)$, $f^{m_1+1}(q)$, $f^{-m_2}(q)$ lie outside $f(K)$.  By \cite[Theorem 2]{Varolin2000}, there is $g\in \Aut_\omega(X)$ such that 
\begin{enumerate}
\item $g$ is as close to $\id_X$ as desired on $f(K)$,
\item $g(f^n(p))= f^n(p)$ and $d_{f^n(p)}g  =\id$ for $-n_2< n\leq n_1$,
\item $g(f^m(q))= f^m(q)$ and $d_{f^m(q)}g  =\id$ for $-m_2< m\leq m_1$,
\item $g(f^{n_1+1}(p))= f^{-m_2}(q)$ and $d_{f^{n_1+1}(p)}g=L_1$, 
\item $g(f^{m_1+1}(q))= f^{-n_2}(p)$ and $d_{f^{m_1+1}(q)}g=L_2,$
\end{enumerate}
where $L_1,L_2$  are linear isomorphisms that preserve $\omega$ such that
\[ L_2 \circ d_{f^{-m_2}(q)}f^{m_1+m_2+1} \circ  L_1\circ d_{f^{-n_2}(p)}f^{n_1+n_2+1} \]
has no eigenvalue equal to 1.  Let $h=g\circ f$.  Then the points
\[ f^{-n_2}(p), \dots, f^{n_1}(p), f^{-m_2}(q),\dots, f^{m_1}(q) \]
form a transverse cycle for $h$ passing through $U$ and $V$.  Thus $h\in A$ and $h$ approximates $f$ as well as desired on $K$.  The proof of Theorem \ref{t:main} is complete.

\section{Genericity of dense stable manifolds}
\label{sec:dense-stable-manifolds}

\noindent
In this section we prove Theorem \ref{t:dense-stable-manifold}.  We continue to assume that $X$ is a Stein manifold of dimension at least $2$ satisfying the volume density property with respect to an exact holomorphic volume form $\omega$.  We show that a generic volume-preserving holomorphic automorphism of $X$ admits a hyperbolic fixed point whose stable manifold is dense in~$X$.

Denote by $\Sigma_f(p)$ the stable manifold of a hyperbolic fixed point $p$ of $f\in\Aut_\omega(X)$.  Preservation of volume implies that $p$ is not repelling, so its attracting dimension is at least 1.  In suitable holomorphic coordinates, $p$ is the origin and $\Sigma_f(p)$ is locally a graph $\Gamma_f(\delta)$ over a small polydisc.  Let $d$ be a distance inducing the topology on $X$, and let $k$ be a distance inducing the topology on $\Aut_\omega(X)$.  Let $\{\varphi_1, \varphi_2, \ldots\}$ be a countable dense subset of $\Aut_\omega(X)$. 

\begin{lemma}
For each $j\geq 1$, there is $\psi_j\in\Aut_\omega(X)$ with a hyperbolic fixed point $p$ such that $k(\psi_j,\varphi_j)<\frac{1}{j}$.
\end{lemma}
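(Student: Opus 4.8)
The goal is to show that every automorphism $\varphi_j$ can be approximated arbitrarily well (to within $\frac{1}{j}$) by an automorphism $\psi_j$ that possesses a genuine hyperbolic fixed point. The natural strategy is to fix $j$, set $f=\varphi_j$, and perturb $f$ in two stages: first create a fixed point, then adjust its differential so that no eigenvalue lies on the unit circle. Both stages should be achievable inside an arbitrarily small neighbourhood of $f$ in $\Aut_\omega(X)$ by appealing to the perturbation machinery already established, namely Theorem \ref{t:perturbations} together with \cite[Theorem 2]{Varolin2000}.

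First I would pick a point $x_0\in X$ and observe that, since $\Aut_\omega(X)$ acts infinitely transitively (as noted in the introduction for $\dim X\geq 2$) and $f$ is close to being prescribable at finitely many points, one can find $g_1\in\Aut_\omega(X)$ arbitrarily close to $\id_X$ such that $f_1=g_1\circ f$ fixes $x_0$. Concretely, choosing the target so that $g_1$ sends $f(x_0)$ back to $x_0$ while acting trivially to first order, one uses \cite[Theorem 2]{Varolin2000} to realise this as a small volume-preserving perturbation supported near the relevant points. The second and more delicate stage is to arrange that $d_{x_0}f_1^{\,}$ has no eigenvalue of modulus $1$. Here I would compose with a further small perturbation $g_2$ fixing $x_0$, chosen so that $d_{x_0}g_2$ is a volume-preserving linear map adjusting the spectrum of $d_{x_0}(g_2\circ f_1)$ off the unit circle. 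Since the Jacobian determinant is forced to equal $1$ by volume preservation, the product of eigenvalues is $1$; but because $\dim X=n\geq 2$, a generic volume-preserving linear adjustment can move the eigenvalues so that none has modulus exactly $1$ (some lie strictly inside, some strictly outside the unit circle, with product $1$). Theorem \ref{t:perturbations} is exactly the tool that produces such a $g_2$: it yields, arbitrarily close to $\id_X$, an automorphism fixing a prescribed point with a prescribed non-unimodular stretching in a chosen direction, and iterating the construction across a spanning set of directions gives the full spectral control while keeping the trace-zero (hence determinant-one) constraint.

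The resulting $\psi_j=g_2\circ f_1$ then fixes $x_0$ with $d_{x_0}\psi_j$ hyperbolic, and since $g_1,g_2$ are as close to the identity as we like, continuity of composition ensures $k(\psi_j,\varphi_j)<\frac{1}{j}$.

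\medskip
\noindent\textbf{Main obstacle.} The genuinely nontrivial point is the simultaneous requirement of volume preservation and hyperbolicity: the constraint $\det d_{x_0}\psi_j=1$ ties the eigenvalues together, so one cannot simply push all of them outward. The argument must exploit $n\geq 2$ to split the spectrum across the unit circle while respecting $\prod\lambda_i=1$, and must realise the desired linear part by an actual \emph{volume-preserving holomorphic automorphism} near the identity rather than merely a linear-algebra adjustment. This is precisely why Theorem \ref{t:perturbations}, which delivers volume-preserving perturbations with prescribed trace-zero differentials at marked points, is the essential input; the linear-algebra step of choosing trace-zero matrices whose exponentials yield a hyperbolic differential is routine by comparison.
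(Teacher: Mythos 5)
Your overall strategy --- perturb $\varphi_j$ by a volume-preserving automorphism that creates a fixed point and then makes its differential hyperbolic --- points in the right direction, and Varolin's theorem is indeed the correct tool. But the first step as you state it cannot work: you claim to find $g_1\in\Aut_\omega(X)$ \emph{arbitrarily close to $\id_X$} with $g_1(f(x_0))=x_0$. Closeness to the identity in $\Aut_\omega(X)$ is measured in the compact-open topology, so if $g_1$ is close to $\id_X$ then $g_1(f(x_0))$ is close to $f(x_0)$; hence $g_1(f(x_0))=x_0$ forces $f(x_0)$ to already be close to $x_0$, which fails for a general $\varphi_j$ and a general $x_0$ (think of a translation of $\C^n$). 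Infinite transitivity lets you move points anywhere, but not by automorphisms near the identity, and a holomorphic perturbation cannot be ``supported near the relevant points'' --- by the identity principle it is either the identity everywhere or nowhere. The same problem infects your final sentence, where you invoke $k$-closeness of $g_1,g_2$ to $\id_X$ and continuity of composition to conclude $k(\psi_j,\varphi_j)<\frac{1}{j}$.

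The paper's proof avoids this by decoupling the two roles of the ambient space. First fix a compact polynomially convex set $K$ and $\epsilon>0$ such that $d_K(\varphi_j,g)<\epsilon$ already implies $k(\varphi_j,g)<\frac{1}{j}$; then choose the would-be fixed point $p$ \emph{outside} $\varphi_j(K)\cup K$. Varolin's Theorem 2 produces a single $h\in\Aut_\omega(X)$ with $d_{\varphi_j(K)}(h,\id)<\epsilon$, $h(\varphi_j(p))=p$, and $d_{\varphi_j(p)}h$ prescribed so that $d_p(h\circ\varphi_j)=d_{\varphi_j(p)}h\circ d_p\varphi_j$ has no eigenvalue of modulus $1$; since $\varphi_j(p)$ lies off $\varphi_j(K)$, the differential there may be \emph{any} determinant-one linear map, so your two stages collapse into one and no delicate spectral adjustment via Theorem \ref{t:perturbations} is needed. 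The automorphism $h$ is \emph{not} close to $\id_X$ globally --- it moves $\varphi_j(p)$ a long way --- but $\psi_j=h\circ\varphi_j$ is close to $\varphi_j$ on $K$, which is all that is required. This placement of the new fixed point outside the compact set controlling the metric, rather than the (routine) linear algebra of determinant-one hyperbolic matrices, is the real crux of the lemma and is the idea missing from your argument.
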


\begin{proof}
Let $j\geq 1$.  Let $K\subset X$ be compact and polynomially convex, and $\epsilon>0$ be such that every $g\in\Aut_\omega(X)$ with $d_K(\varphi_j, g)<\epsilon$ also satisfies $k(\varphi_j,g)<\frac{1}{j}$.  Let $p\notin \varphi_j(K)\cup K$.  By \cite[Theorem 2]{Varolin2000}, there is $h\in\Aut_\omega(X)$ such that $d_{\varphi_j(K)}(h,\id)<\epsilon$, $h(\varphi_j(p))=p$, and $d_{\varphi_j(p)}h\circ d_p\varphi_j$ has no eigenvalues of modulus 1.  Now take $\psi_j=h\circ \varphi_j$.
\end{proof}

Denote $p$ by $\eta(\psi_j)$.  Let $m_j\geq 1$ be its attracting dimension.  We can inductively find numbers $\gamma_j>0$ and compact subsets $H_j$ of $X$ such that $\eta(\psi_j)\in H_j^\circ$ and such that the open sets 
\[ B_j=\{h\in \Aut_\omega(X) : d_{H_j}(\psi_j, h)<\gamma_j\} \]
are mutually disjoint.

By \cite[Lemma 1]{PVW2008}, for $\gamma_j$ small enough, there is a neighbourhood $U\Subset H_j^\circ$ of $\eta(\psi_j)$ such that every $h\in B_j$ admits a unique hyperbolic fixed point $\eta(h)$ in $U$ of attracting dimension $m_j$, which varies continuously in the sense that if $(\ell_i)$ is a sequence in $\Aut_\omega(X)$ such that $d_{H_j}(h,\ell_i)\to 0$, then $\eta(\ell_i)\to \eta(h)$.  (Lemma 1 and Corollary 1, cited below, stated and proved in \cite{PVW2008} for $\C^k$, are easily seen to generalise to our $X$.)

For each $h\in B_j$, there is $\delta>0$ such that locally near $\eta(h)$ in suitable holomorphic coordinates, the stable manifold $\Sigma_{h}(\eta(h))$ is a graph $\Gamma_{h}(\delta)$ over a small polydisc
\[ \Delta^{m_j}(\delta)=\{z\in \C^n : z_i=0 \textrm{ for }  i>m_j, \ \lvert z_i\rvert<\delta \textrm{ for } i\leq m_j\}. \]
Moreover, if $(\ell_i)$ is a sequence in $\Aut_\omega(X)$ such that $d_{H_j}(h,\ell_i)\to 0$, then for large $i$ the stable manifold $\Sigma_{\ell_i}(\eta(\ell_i))$ is, locally near $\eta(h)$, a graph $\Gamma_{\ell_i}(\delta)$ over $\Delta^{m_j}(\delta)$ converging to $\Gamma_{h}(\delta)$ in the Hausdorff distance.

Define $A=\bigcup\limits_{j\geq 1} B_j$.  Clearly $A$ is open and dense in $\Aut_\omega(X)$.

\begin{lemma}
Let $q\in X$ and $\epsilon>0$.  The subset
\[ U(q,\epsilon)=\{f\in A : d(\Sigma_f(\eta(f)), q)<\epsilon\} \]
of $\Aut_\omega(X)$ is open and dense.
\end{lemma}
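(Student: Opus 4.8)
The plan is to show that $U(q,\epsilon)$ is open and dense in $A$, which suffices since $A$ is open in $\Aut_\omega(X)$. Openness will follow from the continuous dependence of the stable manifold on the automorphism, which has already been set up above: for $h\in B_j$, the local stable manifold is a graph $\Gamma_h(\delta)$ that varies continuously in the Hausdorff distance as $h$ varies within $B_j$. Concretely, if $f\in U(q,\epsilon)$, then some point of $\Sigma_f(\eta(f))$ lies within $\epsilon$ of $q$; such a point is reached by finitely many iterations of $f$ applied to a point on the local graph $\Gamma_f(\delta)$. Since $\eta(h)$, the local graph $\Gamma_h(\delta)$, and finitely many iterates of $h$ all depend continuously on $h\in B_j$, nearby $h$ still have a point of $\Sigma_h(\eta(h))$ within $\epsilon$ of $q$. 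Thus $U(q,\epsilon)$ is open.

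For density, the key is to perturb an arbitrary $f\in A$ so that its stable manifold passes near $q$, while keeping the hyperbolic fixed point intact. First I would fix $f\in B_j$ and recall that $\Sigma_f(\eta(f))$ is, globally, the increasing union $\bigcup_{n\geq 0} f^{-n}(\Gamma_f(\delta))$ of backward iterates of the local graph. Because $f$ is expelling (by Theorem \ref{t:expelling}, we may assume this after an arbitrarily small perturbation), the stable manifold is an injectively immersed copy of $\C^{m_j}$ whose backward orbit leaves every compact set. I would then pick a point $x$ on the local graph $\Gamma_f(\delta)$ and follow its forward $f$-orbit; using that $f$ and $f^{-1}$ are expelling, I can arrange that some iterate $f^{-N}(x)$ of a suitable point $x\in\Gamma_f(\delta)$ lies outside a large $\O(X)$-convex compact set containing both $q$ and the relevant part of the fixed-point data.

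The heart of the argument is then an application of \cite[Theorem 2]{Varolin2000}, exactly as in the proof of Theorem \ref{t:main}: I would construct $g\in\Aut_\omega(X)$ that is close to the identity on a large $\O(X)$-convex compact set (so that the fixed point $\eta(f)$, its eigenvalue data, and the local stable graph are essentially unchanged, keeping $h=g\circ f$ inside $B_j$), fixes the relevant finite orbit segment of $x$ with identity differential there, and sends one far-out iterate of the orbit to a prescribed point within $\epsilon$ of $q$. Because the perturbation is supported away from $\eta(f)$ and is arbitrarily small on the compact set controlling the fixed point, the hyperbolic fixed point $\eta(h)$ persists with the same attracting dimension, and the point mapped near $q$ now lies on $\Sigma_h(\eta(h))$. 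Hence $h\in U(q,\epsilon)$ and $h$ approximates $f$, giving density.

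The main obstacle I anticipate is the bookkeeping that guarantees $g\circ f$ remains in the \emph{same} $B_j$ (so that $\eta$ and the continuous stable-manifold structure stay valid) while still achieving the prescribed interpolation data that forces the stable manifold near $q$. This requires choosing the compact set in Varolin's theorem large enough to pin down the fixed-point data of $h$ yet disjoint from the long orbit segment whose endpoint is relocated to the neighbourhood of $q$; arranging this disjointness is precisely where the expelling property of $f$ and $f^{-1}$ is used, exactly as in Section \ref{sec:proof}. The remaining details—verifying that the relocated point genuinely lies on the stable manifold and that continuity gives an open neighbourhood—are routine given the setup from \cite{PVW2008} already imported above.
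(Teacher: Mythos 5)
The openness half is fine and matches the paper, which simply invokes \cite[Corollary 1]{PVW2008}. The density half, however, has a genuine gap. You propose to move a far-out backward iterate $f^{-N}(x)$ of the local stable graph to a prescribed point $q'$ within $\epsilon$ of $q$ by a Varolin-type interpolation $g$ that is close to $\id_X$ on a large $\O(X)$-convex compact set $L$ containing $q$ and the fixed-point data. This cannot work in general: for density you must approximate $f$ on an \emph{arbitrary} compact set $K$, and once $L\supset f(K)$ contains a neighbourhood of $q$, any injective holomorphic map $g$ with $\lVert g-\id\rVert_L$ small already has $q'\in g(L)$ (a degree argument on a small ball about $q'$), so injectivity forbids $g(p)=q'$ for the exterior point $p=f^{-N}(x)$. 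No rearrangement of the interpolation data avoids this, because in your scheme some point of the orbit structure must be relocated to a position near $q$, i.e.\ possibly deep inside the very set on which the perturbation is required to be small. Note also that your stated use of the expelling property (that the stable manifold leaves every compact set) is vacuous: the stable manifold is biholomorphic to $\C^{m_j}$ and hence unbounded regardless, so the expelling property plays no real role in your argument, which is a symptom of the missing mechanism.

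The paper runs the surgery in the opposite direction, and this is exactly where Theorem \ref{t:expelling} is genuinely needed. Since $\tilde f$ may be taken expelling, there is $\tilde q\in B(q,\epsilon/2)$ whose \emph{forward} orbit escapes $K$, say $\tilde f^N(\tilde q)\notin K$; one also picks $x\in \Sigma_{\tilde f}(\eta(\tilde f))\setminus K$ with $\tilde f^n(x)\in K$ for all $n\geq 1$. Both the source $\tilde f^N(\tilde q)$ and the target $x$ then lie outside an $\O(X)$-convex neighbourhood $W$ of $K$, so a volume-preserving isotopy carrying $\tilde f^N(\tilde q)$ to $x$ while avoiding $W$ and keeping $W\cup\varphi_t(V)$ $\O(X)$-convex can be approximated, via the Anders\'en--Lempert theorem, by $\Phi\in\Aut_\omega(X)$ with $\Phi\approx\id$ on $W$; the automorphism $g=\tilde f\circ\Phi$ then sends a whole neighbourhood $U\subset B(q,\epsilon)$ of $\tilde q$ into the local stable graph $\Gamma_g(\delta)$ after $M+N+1$ iterations, so $\Sigma_g(\eta(g))$ meets $B(q,\epsilon)$. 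You would need to restructure your density argument along these lines rather than by pointwise jet interpolation aimed at a target near $q$.
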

\begin{proof}
By \cite[Corollary 1]{PVW2008}, $U(q,\epsilon)$ is open. We show that it is dense.  Let $f\in \Aut_\omega(X)$.  Let $K\subset X$ be compact and $\delta>0$.  We will show that there is $g\in U(q,\epsilon)$ with $d_K(f,g)\leq \delta$.  Since $A$ is open and dense, and by Theorem \ref{t:expelling}, we can find an expelling automorphism $\tilde f\in\Aut_\omega(X)$ such that $d_K(\tilde f,f)\leq \frac{\delta}{2}$ and such that there is $j\geq 1$ with $\tilde f\in B_j$.  Replace $K$ by a bigger $\mathscr{O}(X)$-convex compact set containing $H_j$.
 
Choose $\tilde q\in B(q,\frac{\epsilon}{2})$ whose forward $\tilde f$-orbit is not relatively compact.  Let $N\geq 1$ be the smallest integer such that $\tilde f^N(\tilde q)\in X\setminus K$.  The stable manifold $\Sigma_{\tilde f}(\eta(\tilde f))$ is not relatively compact, being biholomorphic to $\C^{m_j}$, so there is $x\in \Sigma_{\tilde f}(\eta(\tilde f))\setminus K$ such that 
 $\tilde f^n(x)\in K$ for all $n\geq 1$.
 
Choose holomorphic coordinates near $\eta(\tilde f)$ such that  $\Sigma_{\tilde f}(\eta(\tilde f))$ is locally  a graph $\Gamma_{\tilde f}(\delta)$ as described above.  Take $M\geq 1$ such that $\tilde f^M(x)\in \Gamma_{\tilde f}(\delta)$.  Let $W$ be an $\O(X)$-convex neighbourhood of $K$ containing neither $\tilde f^N(\tilde q)$ nor $x$.  Let $V$ be a neighbourhood of $\tilde f^N(\tilde q)$ and let $\varphi : [0,1]\times V\to X$ be a $C^1$-smooth isotopy such that for all $t\in [0,1]$,
\begin{enumerate}
\item $\varphi_t : V\to X$ is holomorphic, injective, and volume-preserving,
\item $\varphi_t(V)$ is disjoint from $W$, 
\item $W\cup \varphi_t(V)$ is $\O(X)$-convex,
\item $\varphi_0$ is the inclusion of $V$ into $X$, 
\item $\varphi_1(\tilde f^N(\tilde q))=x$.
\end{enumerate}
There are relatively compact neighbourhoods $U$ of $\tilde q$ and $Z$ of $\tilde f^M(x)$ such that 
\begin{enumerate}
\item $U\subset B(q,\epsilon)$,
\item $\tilde f^N(U)\Subset V$,
\item $Z\Subset\tilde f^M\circ \varphi\circ \tilde f^N(U)$.
\end{enumerate}
By the Anders\'en-Lempert theorem for Stein manifolds with the volume density property, there is a sequence $(\Phi_j)$ in $\Aut_\omega(X)$ such that $\Phi_j\to \id$ on $W$ and $\Phi_j\to \varphi_1$ on $V$, uniformly on compact subsets.  For $j$ large enough, the automorphism $g=\tilde f\circ \Phi_j$ satisfies the following conditions:
\begin{enumerate}
\item $g\in B_j$,
\item $d_{K}(\tilde f,g)<\frac{\delta}{2}$, 
\item $Z\Subset g^{M+N+1}(U), $
\item  $ \Gamma_{g}(\delta)\cap Z\neq \varnothing.$
\end{enumerate}
Hence $U$ intersects the stable manifold $\Sigma_{g}(\eta(g))$.
\end{proof}

\begin{proof}[Proof of Theorem \ref{t:dense-stable-manifold}]
Let $\{q_j:j\geq 1\}$ be a dense subset of $X$.  Let $\epsilon_j$, $j\geq 1$, be positive numbers converging to zero.  Since $\Aut_\omega(X)$ has the Baire property, the intersection $\bigcap\limits_{j\geq 1} U(q_j,\epsilon_j)$ is a dense $G_\delta$.  If $f$ is an automorphism in the intersection, then the stable manifold $\Sigma_f(\eta(f))$ is dense in $X$.
\end{proof}

\end{document}